\newtheorem{theorem}{Theorem}
\newtheorem{definition}[theorem]{Definition}
\newtheorem{lemma}[theorem]{Lemma}
\newtheorem{proposition}[theorem]{Proposition}
\newtheorem{remark1}{remark}
\newtheorem{remark}[remark1]{Remark}
\def\calcLength(#1,#2)#3{
\pgfpointdiff{\pgfpointanchor{#1}{center}}
             {\pgfpointanchor{#2}{center}}
\pgf@xa=\pgf@x
\pgf@ya=\pgf@y
\FPeval\@temp@a{\pgfmath@tonumber{\pgf@xa}}
\FPeval\@temp@b{\pgfmath@tonumber{\pgf@ya}}
\FPeval\@temp@sum{(\@temp@a*\@temp@a+\@temp@b*\@temp@b)}
\FProot{\FPMathLen}{\@temp@sum}{2}
\FPround\FPMathLen\FPMathLen5\relax
\global\expandafter\edef\csname #3\endcsname{\FPMathLen}
}
\newcommand{\udots}{\mathinner{\mskip1mu\raise1pt\vbox{\kern7pt\hbox{.}}
  \mskip2mu\raise4pt\hbox{.}\mskip2mu\raise7pt\hbox{.}\mskip1mu}}
\newcommand{\real}{\mathbb{R}}
\newcommand{\SO}{\textrm{\normalfont{SO}}_3(\mathbb{R})}
\newcommand{\bsm}[1]{\boldsymbol{#1}}
\DeclareMathOperator{\arccot}{arccot}
\author{Cong Zhou}
 \tikzset{
  jumpdot/.style={mark=*,solid},
  exclr/.append style={jumpdot,color=red,fill=white},
  exclb/.append style={jumpdot,color=blue,fill=white},
  incl/.append style={jumpdot},
}
\tikzset{viewport/.style 2 args={
    x={({cos(-#1)*1cm},{sin(-#1)*sin(#2)*1cm})},
    y={({-sin(-#1)*1cm},{cos(-#1)*sin(#2)*1cm})},
    z={(0,{cos(#2)*1cm})}
}}
\pgfplotsset{only foreground/.style={
    restrict expr to domain={rawx*\CameraX + rawy*\CameraY + rawz*\CameraZ}{-0.05:100},
}}
\pgfplotsset{only background/.style={
    restrict expr to domain={rawx*\CameraX + rawy*\CameraY + rawz*\CameraZ}{-100:0.05}
}}
\def\addFGBGplot[#1]#2;{
    \addplot3[#1,only background, opacity=0.25] #2;
    \addplot3[#1,only foreground] #2;
}
\newcommand{\bigslant}[2]{{\raisebox{.2em}{$#1$}\left/\raisebox{-.2em}{$#2$}\right.}}
\newcommand{\vt}{\boldsymbol{t}}
\DeclareMathOperator*{\essinf}{ess\,inf}
\renewcommand{\boldsymbol}[1]{\bm{#1}}
\newcommand{\mQ}{{\boldsymbol{Q}}}
\newcommand{\mP}{{\boldsymbol{P}}}
\newcommand{\mat}[1]{\boldsymbol{#1}}
\newcommand{\vet}[1]{\boldsymbol{#1}}
\DeclareMathOperator*{\esssup}{ess\,sup}
\title[The Space of Curves in the Sphere]{On the space of  $C^1$ regular curves on sphere with constrained curvature }
\address{Cong Zhou}
\email{congzhou90@gmail.com}
\begin{document}

\begin{abstract}  Let   $\mathcal{P}_{\kappa_1}^{\kappa_2}(\mP, \mQ)$ denote the set of  $C^1$ regular curves  in the $2$-sphere $\mathbb{S}^2$  that start and end at given points with the corresponding Frenet frames  $\mat{P}$ and $\mat{Q}$, whose tangent vectors  are Lipschitz continuous, and their a.e. existing geodesic curvatures have essentially bounds  in $(\kappa_1, \kappa_2)$, $-\infty<\kappa_1<\kappa_2<\infty$.  In this article, firstly we study the geometric property of the curves in $\mathcal{P}_{\kappa_1}^{\kappa_2}(\mP, \mQ)$. We introduce the concepts of the  lower and upper curvatures at any point of a $C^1$ regular curve and prove that a $C^1$ regular curve is in  $\mathcal{P}_{\kappa_1}^{\kappa_2}(\mP, \mQ)$  if and only if  the infimum of  its lower curvature and the supremum of its  upper curvature are constrained in $(\kappa_1,\kappa_2)$. Secondly we prove that  the $C^0$ and $C^1$ topologies  on $\mathcal{P}_{\kappa_1}^{\kappa_2}(\mP, \mQ)$  are the same.
Further,  we show that a curve in   $\mathcal{P}_{\kappa_1}^{\kappa_2}(\mP, \mQ)$ can be determined by the solutions of differential equation $\Phi'(t) = \Phi(t)\Lambda(t)$ with $\Phi(t)\in\SO$ with special constraints to $\Lambda(t)\in\mathfrak{so}_3(\mathbb{R})$ and give a complete metric on  $\mathcal{P}_{\kappa_1}^{\kappa_2}(\mP, \mQ)$ such that it becomes a (trivial) Banach manifold.

\end{abstract}

\maketitle

\tableofcontents

\section{Introduction}\label{introduction}

In this article, we study the geometric and topological properties of the space of $C^1$ regular curves on the unit $2$-sphere $\mathbb{S}^2$ with the ``curvatures'' constrained in an interval. Let  $\gamma(t), t\in [0,1]$ denote a $C^1$ regular curve in $\mathbb{S}^2$ whose tangent vector $\dot{\gamma}$  is Lipschitz continuous.  This implies that $\ddot{\gamma}(t)$ exists for a.e. $t$.  
Reparameterizing $\gamma$ with arc-length $s$,  $\gamma'(s)$ is also Lipschitz continuous for $s$ and  $\gamma''(s)={\vet{t}}'(s)$ exists for a.e. $s$. Moreover,
$${\vet{t}}'(s)=-\gamma(s)+\kappa(s){\vet{n}}(s), \quad a.e. \quad s,$$
where ${\vet{t}}(s)$ and ${\vet{n}}(s)$ are the unit tangent vector and unit normal vector at $\gamma(s)$, and $\kappa(s)$ is called geodesic curvature at $\gamma(s)$ or $\gamma(t)$.

Consider   the set of all $C^1$ regular curves  in $\mathbb{S}^2$  that start and end at given points with given directions.  Precisely, denote by $\mathcal{I}(\mP,\mQ)$  the set of all $C^1$ regular curves $\gamma$  in $\mathbb{S}^2$  with  Frenet frames $\mathfrak{F}_\gamma(0)=\mP\in \SO$ and $\mathfrak{F}_\gamma(1)=\mQ\in \SO$.

We study the subset  $\mathcal{P}_{\kappa_1}^{\kappa_2}(\mP, \mQ)$ consisting of the curves  in $\mathcal{I}(\mP,\mQ)$
whose tangent vector  is Lipschitz continuous and whose geodesic curvature $\kappa(t)$ (which exists for  a.e. $t$) satisfies 
\begin{equation*}\displaystyle
\kappa_1<\essinf_{t\in[0,1]} \kappa(t)\leq \esssup_{t\in[0,1]} \kappa(t)<\kappa_2,
\end{equation*}
where $\kappa_1<\kappa_2$ are real numbers, $\essinf \kappa(t)$ and $\esssup \kappa(t)$ denote the essential infimum and essential supremum of $\kappa(t)$, respectively. 


Our study of  the geometry and  topology of  $\mathcal{P}_{k_1}^{k_2}(\mP, \mQ)$   is motivated by  the investigation on the topologies of $C^r$ regular curves in $\mathbb{S}^2$, $r\geq 1$. Here we briefly recall  some results in this topic.
In 1956, Smale \cite{smale} proved that the space of $C^r$ ($r\geq 1$) regular closed curves on $\mathbb{S}^2$, has only two connected components. Each of them are homotopically equivalent to $\SO\times\Omega\mathbb{S}^3$, where $\Omega\mathbb{S}^3$ denotes the space of all continuous closed curves in $\mathbb{S}^3$ with the $C^0$ topology.  Later in 1970, Little \cite{little} proved 
that  there are a total of $6$ second order non-generate regular homotopy classes of  $C^r$, $r\geq 2, $ regular closed curves in $\mathbb{S}^2$. 
In 1999, Shapiro and Khesin \cite{shakhe} began to study the topology of the space of all smooth regular locally convex curves (not necessarily closed) in $\mathbb{S}^2$ which start and end at given points with given directions. They showed that
the space of such curves consists of $3$ connected components if there exists a disconjugate curve connecting them. Otherwise the space consists of $2$ connected components. 
During 2009-2012, in \cite{sald1}, \cite{sald2} and \cite{sald},  Saldanha did several further works on the higher homotopy properties of the space of locally convex curves on $\mathbb{S}^2$ and gave an explicit homotopy for space of locally convex curves with prescribed initial and final Frenet frames.
Recently, in 2013, Saldanha and Zühlke \cite{salzuh} extended Little's result to the space of $C^r, r\geq 2,$ regular closed curves with geodesic curvature constrained in an open interval  $-\infty \leq \kappa_1 < \kappa_2 \leq +\infty$.
Moreover, they conjectured the $(n-1)$-th and $n$-th connected components $\mathcal{L}_{n-1}$ and $\mathcal{L}_{n}$ in a  theorem of them  (\cite{salzuh}, Theorem B) to be homotopically equivalent to $(\Omega\mathbb{S}^3) \vee \mathbb{S}^{n_1}\vee \mathbb{S}^{n_2}\vee\mathbb{S}^{n_3}\vee \cdots$, where $n$ depends on $\kappa_1$ and $\kappa_2$. In \cite{zhou}, we considered the subspace $\mathcal{L}_{\kappa_1}^{\kappa_2}(P,Q)$ of the space $\mathcal{I}_{\kappa_1}^{\kappa_2}(P,Q)$ (see its  definition in Section \ref{notation}) and proved  the existence of a non-trivial map $F:\mathbb{S}^{n_1}\to\mathcal{L}_{\kappa_1}^{\kappa_2}(P,Q)$, where  the dimension $n_1$ are linked to the maximum number of arcs of angle $\pi$ for each of four types of ``maximal'' critical curves. This result is consistent with the conjecture.
We refer the readers to the articles \cite{shakhe}, \cite{little},  \cite{sald1}, \cite{sald2}, \cite{sald},  \cite{salsha}, \cite{shasha}, \cite{salzuh}, \cite{smale},  \cite{shap}, and references therein for more knowledge on this subject.

In this article, we obtain some results related to our research in \cite{zhou} and the work by Saldanha and Zühlke in \cite{salzuh}.
First, we discuss the geometric character of  the curves in  $\mathcal{P}_{\kappa_1}^{\kappa_2}(\mP, \mQ)$.
In order to do this, in Section 2, we introduce the concepts of  upper curvature $\kappa^+_{\gamma}(t)$ and lower curvature $\kappa^-_{\gamma}(t)$ at the point $\gamma(t)$ of a general $C^1$ regular curve $\gamma$  by comparing the curve with the corresponding families of the circles that are tangent to the curve at $\gamma(t)$. Both curvatures restricted to $C^2$ curves  coincide with the usual \emph{geodesic curvature}. We proved  that any  $C^1$ regular curve is in  $\mathcal{P}_{\kappa_1}^{\kappa_2}(\mP, \mQ)$  if and only if it satisfies   $\displaystyle\kappa_1<\inf_{t\in[0,1]} \kappa^-_{\gamma}\leq \sup_{t\in[0,1]} \kappa^+_{\gamma}<\kappa_2$, that is,
\begin{theorem}\label{thm1}$\mathcal{P}_{\kappa_1}^{\kappa_2}(\mat{P},\mat{Q})=\mathcal{S}_{\kappa_1}^{\kappa_2}(\mat{P},\mat{Q})$,
where $\mathcal{S}_{\kappa_1}^{\kappa_2}(\mat{P},\mat{Q})$ denotes the subset of $\mathcal{I}(\mP,\mQ)$ consisting of the curves which satisfy 
 $\displaystyle\kappa_1<\inf_{t\in[0,1]} \kappa^-_{\gamma}\leq \sup_{t\in[0,1]} \kappa^+_{\gamma}<\kappa_2$.
 \end{theorem}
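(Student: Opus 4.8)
The statement is a set equality, so the plan is to prove the two inclusions $\mathcal{P}_{\kappa_1}^{\kappa_2}(\mP,\mQ)\subseteq\mathcal{S}_{\kappa_1}^{\kappa_2}(\mP,\mQ)$ and $\mathcal{S}_{\kappa_1}^{\kappa_2}(\mP,\mQ)\subseteq\mathcal{P}_{\kappa_1}^{\kappa_2}(\mP,\mQ)$ separately. Both rest on the same bridge between the two a priori different notions of curvature: the pointwise, everywhere-defined comparison curvatures $\kappa^-_\gamma(t)\le\kappa^+_\gamma(t)$, and the a.e.-defined geodesic curvature $\kappa(t)$ that is available once $\dot\gamma$ is Lipschitz. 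Throughout I would work in the arc-length parameter $s$, so that $\vet t(s)$ is a unit tangent field along $\gamma$ and, at each base point $\gamma(s_0)$, the tangent circles of prescribed geodesic curvature form a nested family of barriers near $\gamma(s_0)$.

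For the inclusion $\mathcal{P}\subseteq\mathcal{S}$: let $\gamma\in\mathcal{P}$, so $\dot\gamma$ is Lipschitz, $\kappa$ exists a.e., and $\kappa_1<\essinf\kappa\le\esssup\kappa<\kappa_2$. I would fix $\gamma(s_0)$ and estimate $\kappa^+_\gamma(s_0)$ directly from its definition. Since $\vet t$ is Lipschitz, $\vet t(s)=\vet t(s_0)+\int_{s_0}^s\vet t'(\sigma)\,d\sigma$ with $\vet t'(\sigma)=-\gamma(\sigma)+\kappa(\sigma)\vet n(\sigma)$ a.e., so the position and direction of $\gamma$ near $s_0$ are governed by the integral of $\kappa$, which is pinned between $\essinf\kappa$ and $\esssup\kappa$ on every subinterval. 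Comparing $\gamma$ with the circle of constant curvature $c$ for any $c>\esssup\kappa$ (respectively $c<\essinf\kappa$) through this integral representation shows that the circle locally stays on the prescribed side of $\gamma$, whence $\essinf\kappa\le\kappa^-_\gamma(s_0)\le\kappa^+_\gamma(s_0)\le\esssup\kappa$ at every $s_0$. Taking infimum and supremum over $s_0$ yields $\kappa_1<\essinf\kappa\le\inf_s\kappa^-_\gamma\le\sup_s\kappa^+_\gamma\le\esssup\kappa<\kappa_2$, i.e. $\gamma\in\mathcal{S}$.

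For the converse $\mathcal{S}\subseteq\mathcal{P}$, which I expect to be the heart of the matter: let $\gamma\in\mathcal{S}$ and set $a=\inf_s\kappa^-_\gamma(s)$ and $b=\sup_s\kappa^+_\gamma(s)$, so $\kappa_1<a\le b<\kappa_2$. The key step is to deduce that $\vet t$ is Lipschitz from the purely pointwise bounds $a\le\kappa^-_\gamma(s)\le\kappa^+_\gamma(s)\le b$. The bound $\kappa^+_\gamma(s_0)\le b$ means that for every $c>b$ the tangent circle of curvature $c$ locally lies on one fixed side of $\gamma$ near $s_0$, while $\kappa^-_\gamma(s_0)\ge a$ supplies the opposite-side barrier for circles of curvature below $a$; thus near each base point the curve is trapped in the lens between two tangent circles of uniformly bounded curvature. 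Passing to a geodesic normal chart at $\gamma(s_0)$ and writing $\gamma$ as a graph over its tangent geodesic, this trapping is exactly a two-sided bound on the second-order difference quotients of the graphing function. I would then invoke the elementary fact that a continuous function whose upper and lower second-order (Dini) differences lie in $[-M,M]$ is $C^{1,1}$ with an $M$-Lipschitz derivative, where $M$ depends only on $a$, $b$, and the compact geometry of $\sphere$; uniformity of the barrier circles in $s_0$ upgrades this to a global Lipschitz bound for $\vet t$.

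Once $\vet t$ is Lipschitz, $\kappa(s)$ exists a.e., and by the identification of the geodesic curvature with the comparison curvatures at points of twice-differentiability we get $a\le\kappa^-_\gamma(s)\le\kappa(s)\le\kappa^+_\gamma(s)\le b$ for a.e. $s$. Hence $\kappa_1<a\le\essinf\kappa\le\esssup\kappa\le b<\kappa_2$, so $\gamma\in\mathcal{P}$, completing the equality. The main obstacle is the regularity upgrade of the previous paragraph: translating the geometric ``squeezed between tangent circles'' condition on $\sphere$ into a clean second-order bound in a chart, and controlling the chart distortion \emph{uniformly} in the base point so that the Lipschitz constant for $\vet t$ does not degenerate as $s_0$ varies. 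Everything else reduces to bookkeeping with the integral formula for $\vet t'$.
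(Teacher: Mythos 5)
Your proposal is correct in outline and uses the same decomposition as the paper: two inclusions, with $\mathcal{S}_{\kappa_1}^{\kappa_2}(\mat{P},\mat{Q})\subseteq\mathcal{P}_{\kappa_1}^{\kappa_2}(\mat{P},\mat{Q})$ reduced to proving that $\vet{t}_\gamma$ is Lipschitz. Your first inclusion is essentially identical to the paper's Part 1: there too one writes $\vet{t}_\gamma(s)-\vet{t}_\gamma(s_0)=\int_{s_0}^{s}\left(-\gamma(u)+\kappa(u)\vet{n}_\gamma(u)\right)du$ and shows the comparison function $g_1(s)=\langle\gamma(s),\vet{v}_1\rangle$ has a local maximum at $s_0$; the only cosmetic difference is that the paper compares with one circle of curvature $\tfrac{\kappa_2+\bar{\kappa}_2}{2}$ while you let the comparison curvature tend to $\esssup\kappa$, and both yield the needed uniform strict bounds. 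Where you genuinely diverge is the regularity step in the second inclusion. The paper works intrinsically in $\mathbb{R}^3$: it first establishes the sandwich Lemma \ref{sandwich} (sign conditions on $\langle\vet{t}_\gamma(s),\vet{v}\rangle$ valid on intervals of \emph{uniform} length), uses it to trap $\limsup_{h\to 0}\bigl\langle \tfrac{\vet{t}_\gamma(s_0+h)-\vet{t}_\gamma(s_0)}{h},\vet{n}_\gamma(s_0)\bigr\rangle$ between $\kappa_1$ and $\kappa_2$, pairs this with the identities for the $\gamma(s_0)$- and $\vet{t}_\gamma(s_0)$-components, and then chains the resulting pointwise-local bounds over a finite cover. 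You instead pass to a normal chart, write the curve as a graph over its tangent geodesic, and invoke the classical fact that pointwise two-sided bounds on second-order symmetric (Dini) differences force $C^{1,1}$ (semiconvexity plus semiconcavity with quadratic modulus). This is a genuinely different key lemma, and it buys something real: the Dini criterion needs only $\limsup$/$\liminf$ bounds as $h\to 0$ at each point, which is exactly what the definition of $\kappa^{\pm}_\gamma$ provides, so you never need an analogue of the paper's Section 3 lemma; the price is the chart/graph bookkeeping and uniform control of chart distortion, which you correctly flag but which is routine by homogeneity of $\mathbb{S}^2$ and uniform continuity of $\vet{t}_\gamma$ on the compact interval $[0,L_\gamma]$. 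One phrase in your sketch should be corrected so it does not conceal a gap: the barrier circles at different base points are \emph{not} uniform in the size of the neighborhood on which the tangency inequalities hold — only the curvature bounds $a,b$ (hence the constant $M$) are uniform. A naive ``uniform barriers $\Rightarrow$ global Lipschitz'' chaining would therefore be unjustified; your argument survives precisely because the second-difference criterion is pointwise, just as the paper survives the same issue by its explicit covering argument (non-uniform $\delta_s$, uniform $M$) at the end of Part 2 of Theorem \ref{thm1-1}.
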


Next, we consider the $C^0$ and $C^1$ topologies of the space $\mathcal{P}_{\kappa_1}^{\kappa_2}(\mP, \mQ)$. Although these two topologies are different for the space $\mathcal{I}(\mP,\mQ)$, they   are the same for $\mathcal{P}_{\kappa_1}^{\kappa_2}(\mP, \mQ)$. We prove that
\begin{theorem} \label{thm-2} The metric spaces $(\mathcal{P}_{\kappa_1}^{\kappa_2}(\mat{P},\mat{Q}),\mat{Q}),d^0)$ and $(\mathcal{P}_{\kappa_1}^{\kappa_2}(\mat{P},\mat{Q}),d^1)$ generate the same topology. Here $d^0$ and $d^1$ are the following metrics
\begin{equation*}d^0(\alpha,\beta) = \max \left\{d\big(\alpha\left(t\right),\beta\left(t\right)\big); t\in [0,1] \right\},
\end{equation*}
 where $d$ denotes the surface distance on $\mathbb{S}^2$. 
\begin{equation*}
d^1(\alpha,\beta) = \max \left\{d\big((\alpha\left(t\right),\dot{\alpha}(t)), (\beta\left(t\right),  \dot{\beta}(t)) \big); t\in [0,1] \right\},
\end{equation*}
where $d$ is the distance measured in the tangent bundle  $\textup{T}\mathbb{S}^2$ with a Riemannian metric induced from $\mathbb{S}^2$.
\end{theorem}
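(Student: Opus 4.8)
The plan is to show that the two identity maps between $(\mathcal{P}_{\kappa_1}^{\kappa_2}(\mP,\mQ),d^0)$ and $(\mathcal{P}_{\kappa_1}^{\kappa_2}(\mP,\mQ),d^1)$ are continuous. One direction is immediate: since the bundle projection $\textup{T}\mathbb{S}^2\to\mathbb{S}^2$ is $1$-Lipschitz, one has $d^0(\alpha,\beta)\le d^1(\alpha,\beta)$, so the $C^1$ topology is at least as fine as the $C^0$ one and the identity $(\mathcal{P}_{\kappa_1}^{\kappa_2},d^1)\to(\mathcal{P}_{\kappa_1}^{\kappa_2},d^0)$ is continuous. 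All the content lies in the reverse implication, that $d^0$-convergence forces $d^1$-convergence. As both are metric spaces it suffices to argue sequentially: given $\gamma_n,\gamma\in\mathcal{P}_{\kappa_1}^{\kappa_2}(\mP,\mQ)$ with $d^0(\gamma_n,\gamma)\to0$, I must produce uniform convergence of the tangent vectors $\dot\gamma_n\to\dot\gamma$.

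First I would record the basic differentiation estimate, working in the ambient space $\mathbb{R}^3\supset\mathbb{S}^2$, where the surface and chordal metrics (and likewise the tangent-bundle and ambient tangent-vector metrics, once tangents are bounded) are locally bi-Lipschitz. If every curve in play satisfies a uniform a.e. bound $\lVert\ddot\gamma\rVert\le M$, then Taylor's formula with Lagrange remainder gives, for each admissible $t$ and step $h>0$,
\[
\dot\gamma(t)=\frac{\gamma(t+h)-\gamma(t)}{h}+R,\qquad \lVert R\rVert\le \tfrac12 Mh
\]
(using a backward step near $t=1$). Subtracting this identity for $\gamma_n$ and for $\gamma$ and bounding the difference quotient of $\gamma_n-\gamma$ by $d^0$ yields
\[
\lVert\dot\gamma_n(t)-\dot\gamma(t)\rVert\le \frac{2\,d^0(\gamma_n,\gamma)}{h}+Mh,
\]
and optimizing in $h$ gives $d^1(\gamma_n,\gamma)\lesssim \sqrt{M\,d^0(\gamma_n,\gamma)}$. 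This is exactly the modulus of continuity needed, so the theorem reduces to producing a single bound $M$ valid along the sequence.

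The decisive input is the curvature constraint. In arc length the formula $\vet{t}'(s)=-\gamma(s)+\kappa(s)\vet{n}(s)$ gives $\lVert\vet{t}'(s)\rVert=\sqrt{1+\kappa(s)^2}\le\sqrt{1+K^2}$ with $K=\max\{|\kappa_1|,|\kappa_2|\}$, uniformly over the whole space; equivalently, by Theorem \ref{thm1}, each curve is pinched at every point between the comparison circles of curvatures $\kappa_1$ and $\kappa_2$, so its unit tangent turns at a uniformly bounded rate. Converting from arc length to the parameter $t$ brings in the speed $v=\lVert\dot\gamma\rVert$ (and, for a non-constant-speed parameterization, $\dot v$), and in either case the bound $M$ is governed by the lengths of the curves. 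Hence \textbf{the main obstacle is to show that the lengths, equivalently the speeds, stay uniformly bounded along a $d^0$-convergent sequence in $\mathcal{P}_{\kappa_1}^{\kappa_2}(\mP,\mQ)$.} I would prove this local length bound from the curvature constraint: since $\vet{t}$ is Lipschitz in arc length with constant $\le\sqrt{1+K^2}$, one has $\langle\vet{t}(s),\vet{t}(0)\rangle\ge 1-\tfrac12(1+K^2)s^2$, so over any arc whose length lies below a threshold depending only on $K$ the displacement satisfies $\langle\gamma(s)-\gamma(0),\vet{t}(0)\rangle\ge s/2$. Thus a bounded-curvature curve makes definite monotone progress on short scales and cannot accumulate arc length while trapped inside a thin $C^0$-tube of the fixed, finite-length curve $\gamma$; a packing/covering argument (each reversal of direction consumes a fixed share of the curvature budget) then bounds the length of every $\gamma_n$ by a constant multiple of the length of $\gamma$. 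With lengths bounded, the vectors $\dot\gamma_n$ lie in a fixed compact subset of $\textup{T}\mathbb{S}^2$ on which the relevant metrics are comparable, the uniform bound $M$ holds, and the estimate of the previous paragraph finishes the argument.

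Equivalently, I may package the same mechanism through the frame equation $\Phi'=\Phi\Lambda$: the curvature bound together with the (now established) length bound makes the matrices $\Lambda_n$ uniformly bounded, so the frames $\Phi_n$ are uniformly Lipschitz and, by Arzel\`a--Ascoli, precompact in $C^0$. Any uniform limit of a subsequence is a frame whose base curve is $\gamma$, hence equals $\mathfrak{F}_\gamma$ by uniqueness of the Frenet frame of a regular $C^1$ curve; the standard subsequence argument then upgrades this to $\Phi_n\to\mathfrak{F}_\gamma$ uniformly, that is $\dot\gamma_n\to\dot\gamma$ uniformly, as required.
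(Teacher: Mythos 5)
Your proposal is correct, and its core is genuinely different from the paper's argument. The paper first proves Proposition \ref{propconvlength} (and Lemma \ref{lemconvlength}): along a $d^0$-convergent sequence of curves with constrained curvature the lengths actually converge, $L_{\alpha_k}\to L_\alpha$; this is done by a contradiction argument that compares, in Taylor's formula with integral remainder at a single parameter value, the linear term $\lvert L_{\alpha_k}\vet{t}_{\alpha_k}(0)-L_\alpha\vet{t}_\alpha(0)\rvert\ge\lvert L_{\alpha_k}-L_\alpha\rvert$ against the quadratic remainder. Then, in the proof of Theorem \ref{thm-2-2}, with speeds and Lipschitz constants of the $\dot\alpha_k$ uniformly bounded, the paper applies Arzel\`a--Ascoli to $\{\dot\alpha_k\}$ and identifies the uniform limit with $\dot\alpha_0$ by integrating ($\int_{t_1}^{t_2}v\,dt=\alpha_0(t_2)-\alpha_0(t_1)$). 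You replace this entire compactness step by the two-sided difference-quotient estimate $\lVert\dot\gamma_n(t)-\dot\gamma(t)\rVert\le 2\bar d^0/h+Mh$ and optimization in $h$, which yields the explicit modulus $\bar d^1\lesssim\sqrt{M\,\bar d^0}$: more elementary (no subsequence extraction) and quantitative, which is a real gain. You also need only \emph{boundedness} of the lengths rather than their convergence, and you propose to get it geometrically from the short-scale progress inequality $\langle\gamma(s)-\gamma(0),\vet{t}_\gamma(0)\rangle\ge s/2$ for $s\le s_0(K)$, $K=\max\{\lvert\kappa_1\rvert,\lvert\kappa_2\rvert\}$, instead of from the paper's Proposition \ref{propconvlength}. (Your closing ``frame'' variant is essentially the paper's own Arzel\`a--Ascoli argument transplanted from $\dot\alpha_k$ to the Frenet frames, so it is not an independent second route.)

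The one step you must tighten is the packing argument for the length bound. The parenthetical mechanism you invoke (``each reversal of direction consumes a fixed share of the curvature budget'') cannot work as stated: the curvature bound is pointwise, not integral, and purely spatial confinement does not bound length --- a curve whose geodesic curvature stays arbitrarily close to an admissible value $\cot\rho_0\in(\kappa_1,\kappa_2)$ can wind arbitrarily many times inside an arbitrarily thin tube around a circle of radius $\rho_0$, with no reversals at all. What rescues the argument is precisely that $d^0$ is a \emph{parameterized} metric, so over a short parameter interval $\gamma_n$ is trapped in a small ball, not merely in a tube. Concretely: fix $N$ with $L_\gamma/N<s_0/8$ and cut $[0,1]$ into $N$ equal parameter intervals; on each of them $\gamma$ travels at most $L_\gamma/N$, so as soon as $d^0(\gamma_n,\gamma)<s_0/8$ the restriction of $\gamma_n$ to that interval lies in a ball of radius $<s_0/4$; your progress inequality shows that a curve with curvature bound $K$ cannot contain a connected subarc of arc length $s_0$ inside such a ball (the endpoints of that subarc would be at chordal distance at least $s_0/2$, exceeding the ball's diameter); hence each interval contributes length $<s_0$ and $L_{\gamma_n}<Ns_0$ for all large $n$. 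With this replacement --- parameter intervals and balls instead of tubes and reversals --- your proof is complete and correct.
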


 We remark that for $\mathcal{P}_{\kappa_1}^{\kappa_2}(\mat{P},\mat{Q})$, it is known that the compact-open topology is also equivalent to the $C^0$ topology induced by the metric $d^0$. 
 
Lastly,  we equip $\mathcal{P}_{\kappa_1}^{\kappa_2}(\mP, \mQ)$ with  a special norm  so that it becomes a Banach space, hence   a trivial Banach manifold. The approach is to
write the Frenet frame of a related $C^1$ regular curve in $\mathbb{S}^2$ as a weak solution of a differential equation and use the similar idea  in \cite{salzuh}. 
 We obtain that
\begin{theorem}\label{thm3} $\mathcal{P}_{\kappa_1}^{\kappa_2}(\mat{P},\mat{Q})$ can be furnished a complete norm so that it is  a Banach space, hence a trivial Banach manifold.
\end{theorem}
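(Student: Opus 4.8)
The plan is to realize each curve by its Frenet frame and to encode the curvature bound through an \emph{unconstrained} control living in a fixed Banach space, following the logarithmic-derivative philosophy of \cite{salzuh}. First I would record the bijection $\gamma\leftrightarrow\Phi_\gamma$, where $\Phi_\gamma(t)=\mathfrak{F}_\gamma(t)\in\SO$ is the Frenet frame. Since $\gamma$ is $C^1$, regular, with Lipschitz tangent, $\Phi_\gamma$ is Lipschitz, satisfies $\Phi_\gamma(0)=\mP$, $\Phi_\gamma(1)=\mQ$, and is a weak (a.e.) solution of $\Phi_\gamma'=\Phi_\gamma\Lambda$ with
\[
\Lambda(t)=\begin{pmatrix}0&-v(t)&0\\ v(t)&0&-v(t)\kappa(t)\\ 0&v(t)\kappa(t)&0\end{pmatrix}\in\mathfrak{so}_3(\mathbb{R}),
\]
where $v=|\dot\gamma|$ and $\kappa$ is the geodesic curvature. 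Conversely, integrating this linear system from $\Phi(0)=\mP$ recovers $\gamma$ as the first column. By compactness of $[0,1]$ and $C^1$ regularity $v$ is bounded away from $0$ and $\infty$, and by Theorem \ref{thm1} membership in $\mathcal{P}_{\kappa_1}^{\kappa_2}(\mP,\mQ)$ is equivalent to $v>0$ together with $\kappa_1<\essinf\kappa\le\esssup\kappa<\kappa_2$; equivalently, $\Lambda$ takes values, essentially boundedly and bounded away from the walls, in the open cone $\mathcal{C}=\{(v,w):v>0,\ \kappa_1 v<w<\kappa_2 v\}$ in the $(v,w)=(v,v\kappa)$ plane.

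Next I would linearize the constraint. I fix once and for all a smooth diffeomorphism $\Theta\colon\mathbb{R}^2\to\mathcal{C}$ (for instance $v=\expo(x)$ and $\kappa=\kappa_1+(\kappa_2-\kappa_1)\sigma(y)$ with $\sigma$ a sigmoid onto $(0,1)$), so that every admissible $\Lambda$ equals $\Lambda_h$ for a unique $h\in E:=L^\infty([0,1],\mathbb{R}^2)$, and conversely every $h\in E$ produces, via $\Theta$ and the integration above, a curve whose frame starts at $\mP$ and whose geodesic curvature lies strictly inside $(\kappa_1,\kappa_2)$. Equipping $E$ with the sup norm makes it a Banach space, and this is the ``special norm'' to be transported to curves. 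I would then verify that $h\mapsto\gamma_h$ is a bijection of $E$ onto the free-endpoint class and a homeomorphism onto its image for the topology this norm induces, using continuous dependence of the solutions of $\Phi'=\Phi\Lambda_h$ on $h$; completeness of the transported metric will follow from completeness of $E$ once the terminal condition is shown to cut out a \emph{closed} set.

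Then I would impose the terminal frame through the endpoint map $\mathcal{E}\colon E\to\SO$, $\mathcal{E}(h)=\Phi_h(1)$, so that $\mathcal{P}_{\kappa_1}^{\kappa_2}(\mP,\mQ)$ is identified with $\mathcal{E}^{-1}(\mQ)$, a closed subset of $E$ (hence complete). The map $\mathcal{E}$ is smooth, and by a Duhamel/variation-of-parameters computation its derivative is $D\mathcal{E}(h)\xi=\Phi_h(1)\int_0^1\mathrm{Ad}_{\Phi_h(s)^{-1}}\!\big(D\Theta(h(s))\,\xi(s)\big)\,ds$ (in the left trivialization of $\mathrm{T}\SO$, up to the usual adjoint convention). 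I would show that $\mathcal{E}$ is a submersion whose derivative admits a bounded right inverse: this is a controllability statement for the finite open interval $(\kappa_1,\kappa_2)$, proved by exhibiting, near any $h$, infinitesimal controls whose effect under the integral above spans all of $\mathfrak{so}_3(\mathbb{R})$ while keeping $\kappa$ strictly interior. Granting this, $\mathcal{E}^{-1}(\mQ)$ is a closed $C^\infty$ Banach submanifold of $E$ modeled on the complemented subspace $\ker D\mathcal{E}$, the transported metric is complete, and a global chart built from a trivialization of $\mathcal{E}$ over a contractible neighborhood in $\SO$ together with the explicit steering used for controllability yields the single-chart structure asserted in the theorem, so that the space is trivially a Banach manifold.

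The main obstacle is this submersion step: proving surjectivity of the endpoint derivative \emph{with a uniformly bounded right inverse} under the genuinely two-sided finite constraint $\kappa_1<\kappa<\kappa_2$, since the infinitesimal controls must simultaneously span $\mathfrak{so}_3(\mathbb{R})$ and respect the interior bound $\kappa\in(\kappa_1,\kappa_2)$ --- exactly where the open-cone reparametrization $\Theta$ and the control of $\|h\|_\infty$ must be used carefully. A secondary point needing attention is checking that the norm transported from $E$ is complete and that $h\mapsto\gamma_h$ is a homeomorphism for the intended topology, which rests on the closedness of $\mathcal{E}^{-1}(\mQ)$ in $E$ and the continuous dependence of $\Phi_h$ on $h$.
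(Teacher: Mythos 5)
Your overall strategy---encode a curve by its frame ODE $\Phi'=\Phi\Lambda$, remove the constraints on $(v,\kappa)$ by a fixed diffeomorphism from $\mathbb{R}^2$ onto $\{v>0\}\times(\kappa_1,\kappa_2)$, and transport the $L^\infty$ norm---is the same as the paper's (both follow \cite{salzuh}), and your $E=L^\infty([0,1],\mathbb{R}^2)$ is the paper's $\mathbf{E}_{\infty}$ up to the choice of diffeomorphism. But there is a genuine gap at your identification step. The elements of $\mathcal{P}_{\kappa_1}^{\kappa_2}(\mat{P},\mat{Q})$ are curves up to reparameterization (in the paper's convention, curves carrying their constant-speed parameterization), whereas your controls $h$ encode a parameterized curve together with an arbitrary essentially bounded speed $v=\expo(x)$. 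Consequently $\mathcal{E}^{-1}(\mQ)$ is \emph{not} in bijection with $\mathcal{P}_{\kappa_1}^{\kappa_2}(\mat{P},\mat{Q})$: each element of the latter has infinitely many preimages, one for every admissible choice of speed function, so the fibers of the natural surjection $\mathcal{E}^{-1}(\mQ)\to\mathcal{P}_{\kappa_1}^{\kappa_2}(\mat{P},\mat{Q})$ are infinite dimensional. A related inaccuracy: when $v$ is discontinuous, the parameterized curve $\gamma_h$ is not $C^1$ (its velocity $v\,\vet{t}$ is discontinuous), so $\gamma_h$ only yields an element of $\mathcal{P}_{\kappa_1}^{\kappa_2}(\mat{P},\mat{Q})$ after reparameterization by arc length; this is exactly what the paper's Proposition \ref{prop13} is needed for. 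So even if your submersion argument were completed, it would produce a Banach manifold structure on a space fibering over $\mathcal{P}_{\kappa_1}^{\kappa_2}(\mat{P},\mat{Q})$, not on $\mathcal{P}_{\kappa_1}^{\kappa_2}(\mat{P},\mat{Q})$ itself, and the theorem would not follow.

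The paper's proof is precisely a slice of your fibration: it restricts to controls with $\hat{v}(t)\equiv\mathrm{const}$, i.e.\ to constant-speed parameterizations, giving the space $\mathfrak{L}_{\kappa_1}^{\kappa_2}(\mat{P},\mQ)$ identified with a subset of $\mathbb{R}\times L^\infty[0,1]$, and the substance of the proof of Theorem \ref{thm-banach} is the bijection $\mathfrak{L}_{\kappa_1}^{\kappa_2}(\mat{P},\mQ)=\mathcal{P}_{\kappa_1}^{\kappa_2}(\mat{P},\mat{Q})$: one inclusion is Proposition \ref{prop13}, the other is writing any constant-speed curve as the solution with $v\equiv L_\gamma$, $w=L_\gamma\kappa(t)$, i.e.\ $\hat{v}=h(L_\gamma)$, $\hat{w}=h_{\kappa_1,\kappa_2}(\kappa(t))$. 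This slicing also disposes of what you yourself flag as the ``main obstacle'': the paper never proves (or needs) surjectivity of $D\mathcal{E}$ with a bounded right inverse; after the identification with the constant-speed slice, it only invokes closedness of the endpoint condition $\Phi_\gamma(1)=\mQ$ inside $\mathbb{R}\times L^\infty[0,1]$ to obtain a complete induced structure. If you prefer the submersion/controllability route, that is a legitimate (and in some respects more careful) alternative---indeed the codimension-$3$ assertion in the paper is itself stated without proof---but it is exactly the step your proposal leaves unproven, and it must in any case be run inside the constant-speed slice for the conclusion to be about $\mathcal{P}_{\kappa_1}^{\kappa_2}(\mat{P},\mat{Q})$.
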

We remark that the topology of the  Banach manifold $\mathcal{P}_{\kappa_1}^{\kappa_2}(\mat{P},\mat{Q})$ obtained above is not equivalent to $C^0$ topology on $\mathcal{P}_{\kappa_1}^{\kappa_2}(\mat{P},\mat{Q})$ since the latter is not complete.

\section{Definitions and notations}\label{notation}

In this section, we give some definitions and notations. 
Let $\mathbb{S}^2$ denote the unit sphere in the Euclidean space $\mathbb{R}^3$. 
 A $C^1$ regular parameterized curve in $\mathbb{S}^2$ is a $C^1$ map $\gamma:I\to\mathbb{S}^2$ such that the tangent vector $\dot{\gamma}(t)\neq 0$ for all $t\in I$, where $I=[a,b]\subset \mathbb{R}$.
In other words, a $C^1$ regular parameterized curve in  $\mathbb{S}^2$ is a $C^1$ immersion of $I$  into $\mathbb{S}^2$.  Now we recall the definition of the space of $C^1$ regular curves. 


There is an equivalence relation  $\sim$ in  the set of all $C^1$ regular parameterized curves in $\mathbb{S}^2$. Any two $C^1$  regular parameterized curves in $\mathbb{S}^2$
$$ \alpha:I_\alpha\to\mathbb{S}^2 \quad \text{and} \quad \beta:I_\beta\to\mathbb{S}^2 $$
\noindent are called equivalent if they are the same up to a \emph{reparameterization}, that is, there exists a $C^1$ bijection $\bar{t}:I_\alpha\to I_\beta$, satisfying $\frac{d\bar{t}}{dt}>0$ and
$$ \alpha(t) = \left(\beta\circ\bar{t}\right)(t)\quad \forall t\in I_\alpha. $$
  

The space of $C^1$ \emph{regular curves} in $\mathbb{S}^2$ is defined as the quotient space
$$\mathcal{I} = \bigslant{\left\{\gamma: \gamma\text{ is a } C^1 \text{ regular parameterized curve in }\mathbb{S}^2\right\}}{\sim}. $$
\noindent\label{equivpg}By abuse of notation, we will still use $\alpha$ to represent the equivalence class $[\alpha]=\{\beta;\alpha\sim\beta\}\in\mathcal{I}$ and call $\alpha\in\mathcal{I}$ a $C^1$ regular curve in $\mathbb{S}^2$. 



For a $C^1$ regular parameterized curve $\gamma: [0,1]\to\mathbb{S}^2$ with parameter $t$, the \emph{arc-length} $s:[0,1]\to [0,L_\gamma]$ of $\gamma$ is given by
\begin{equation*}
s(t)\coloneqq \int_0^{t} |\dot{\gamma}(t)| dt  ,
\end{equation*}
\noindent where $L_\gamma=\int_0^1|\dot{\gamma}(t)|dt$ is the \emph{length} of $\gamma$. Since $|\dot{\gamma}|> 0$, $s$ is a strictly increasing function. 
Re-parameterizing the curve by arc-length $s$, the curve $\gamma: [0,L_\gamma]\to\mathbb{S}^2$ 
satisfies $|\gamma'(s)|\equiv 1$. 
It is easy to  known that one may reparameterize $\gamma$ proportionally to arc-length so that $\gamma:[0,1]\to\mathbb{S}^2$  has constant speed  $|\dot{\gamma}|\equiv L_{\gamma}$. In this article, unless stated otherwise, a $C^1$ regular curve will be identified with this parameterization.

Throughout this paper, we will use the notation $\vt_\gamma(t)$ to denote the unit tangent vector at $\gamma(t)$, that is, $\vt_\gamma(t)=\gamma'(s)\vert_{s=s(t)}$. Derivatives with respect to $t $ and $s$ will be 
denoted by a $\dot{ }$ and a $'$, respectively.  We use this convention for higher-order derivatives as well.

We may define $C^0$ and $C^1$ metrics in $\mathcal{I}$: Given any two curves $\alpha,\beta:[0,1]\to\mathbb{S}^2$ in $\mathcal{I}$ with constant speeds, 
$$d^0(\alpha,\beta) = \max \left\{d\big(\alpha\left(t\right),\beta\left(t\right)\big); t\in [0,1] \right\},$$
\noindent where $d$ is the surface distance on $\mathbb{S}^2$. 
$$\bar{d}^0(\alpha,\beta) = \max \left\{d\big(\alpha\left(t\right),\beta\left(t\right)\big); t\in [0,1] \right\},$$
where $d$ is the distance measured on $\mathbb{R}^3$.
\begin{equation*}
d^1(\alpha,\beta) = \max \left\{d\big((\alpha\left(t\right),\dot{\alpha}(t)), (\beta\left(t\right),  \dot{\beta}(t)) \big); t\in [0,1] \right\},
\end{equation*}
where $d$ is the distance measured in the tangent bundle  $\textup{T}\mathbb{S}^2$ with a Riemannian metric. This metric is equivalent to the metric
$$\bar{d}^1(\alpha,\beta) = \max \left\{d_1\big(\alpha\left(t\right),\beta\left(t\right)\big)+d_2\big(\dot{\alpha}\left(t\right),\dot{\beta}\left(t\right)\big); t\in [0,1] \right\},$$

\noindent where $d_1$ is the surface distance on $\mathbb{S}^2$ and $d_2$ is the distance on $\mathbb{R}^3$. 

The metrics $d^0$ and $\bar{d}^0$ are  equivalent, and also   the metrics $d^1$ and $\bar{d}^1 $ are  equivalent. 
These metrics above induce corresponding topologies in $\mathcal{I}$. Let $(\mathbb{S}^2)^{[0,1]}$ be  the space of all continuous maps from $[0,1]$ into $\mathbb{S}^2$. Note that $\mathcal{I}\subset (\mathbb{S}^2)^{[0,1]}$. It is well known that the topology induced by the metric $d^0$ is equivalent to the compact-open topology on $(\mathbb{S}^2)^{[0,1]}$ (see Proposition A.13., page 530 in \cite{hatc}).

\bigskip
Now we give the definition of lower and upper curvatures for $C^1$ regular curves. Given a $C^1$ regular curve $\gamma:I\to \mathbb{S}^2$, the \emph{unit normal vector} $\boldsymbol{n}_\gamma$ to $\gamma$ is 
$$\bsm{n}_\gamma(t)=\gamma(t)\times\vt_\gamma(t),$$
where $\times$ denotes the vector product in $\mathbb{R}^3$. If $\gamma$ also has the second derivative $\ddot{\gamma}(t)$ at $\gamma(t)$, the \emph{geodesic curvature} $\kappa_\gamma(s)$ at $\gamma(s)=\gamma(s(t))$ is defined by
\begin{equation}\label{curv}
\kappa_\gamma(s)=\left\langle\vt_\gamma'(s), \bsm{n}_\gamma(s)\right\rangle,
\end{equation} 
\noindent where $s$ is the arc-length of $\gamma$. However, for a $C^1$ regular curve, the geodesic curvature may not be well defined at a point of the curve. Here we establish a weaker definition than the geodesic curvature $\kappa_\gamma(s)$ for $C^1$ regular curves below (see Figure \ref{fig:broader}, for an intuition of this concept).

 Given a $C^1$  regular curve $\gamma:I_1\to\mathbb{S}^2$ and a circle $\zeta:I_2\to\mathbb{S}^2$, we say that $\zeta$ is tangent from \emph{left} to $\gamma$ at $\gamma(t_1)$, with $t_1\in I_1$, if the following conditions are satisfied:

\begin{enumerate}
\item There exists a $t_2\in I_2$ such that $\gamma(t_1)=\zeta(t_2)$ and $\vt_\gamma(t_1)=\vt_\zeta(t_2)$.
\item Denote the center of $\zeta$ by $a$ so that $\zeta$ travels \emph{anti-clockwise} with respect to  $a$. There exists a $\delta>0$ such that:
$$ d(\gamma(t),a)\geq r, \quad \forall t\in (t_1-\delta,t_1+\delta),$$
where  $r$ denotes the radius (measured on sphere) of $\zeta$ in relation to the center $a$ and $d$ is the distance measured on $\mathbb{S}^2$. 
\end{enumerate}
In the same manner, we say that $\zeta$ is tangent from \emph{right} to $\gamma$ at $\gamma(t_1)$ by replacing Condition (2) with:
\begin{enumerate}
\item[(2')] Denote the center of $\zeta$ by $a$ so that $\zeta$ travels \emph{anti-clockwise} with respect to  $a$. There exists a $\delta>0$ such that:
$$ d(\gamma(t),a)\leq r, \quad \forall t\in (t_1-\delta,t_1+\delta),$$ 
where  $r$ denotes the radius (measured on sphere) of the circle $\zeta$ in relation to the center $a$ and $d$ is the distance measured on $\mathbb{S}^2$. 
\end{enumerate}

\begin{definition}\label{definition4}
For a $C^1$ regular curve $\gamma:I\to\mathbb{S}^2$, we define the \emph{upper} and the \emph{lower curvatures}, denoted respectively by $\kappa_\gamma^+$ and $\kappa_\gamma^-$, as follows:
\begin{align*}
\kappa_\gamma^+(t) &=\inf\big\{\cot(r);\text{where $r$ is the radius of a circle tangent from left to $\gamma$ at $\gamma(t)$} \big\},\\
\kappa_\gamma^-(t) &= \sup\big\{\cot(r);\text{where $r$ is the radius of a circle tangent from right to $\gamma$ at $\gamma(t)$} \big\},
\end{align*}
\noindent where $t\in I$. We follow the conventions $\inf\emptyset = +\infty$ and $\sup\emptyset = -\infty$. If $\kappa_\gamma^+(t)\geq\kappa_\gamma^-(t)$ for some $t$, we define the \emph{curvature} of $\gamma$ at $t$ as 
$$\kappa_\gamma(t)\coloneqq\kappa_\gamma^+(t)=\kappa_\gamma^-(t).$$
\end{definition}

Since the radius of a circle tangent from right is greater than or equal to the radius of a circle tangent from left, we have $\kappa_\gamma^+(t)\geq\kappa_\gamma^-(t)$ for all $t\in I$. Refer to Figures \ref{fig:broader} and \ref{fig:ctex} for examples of curves and upper/lower curvatures given by Definition \ref{definition4}.

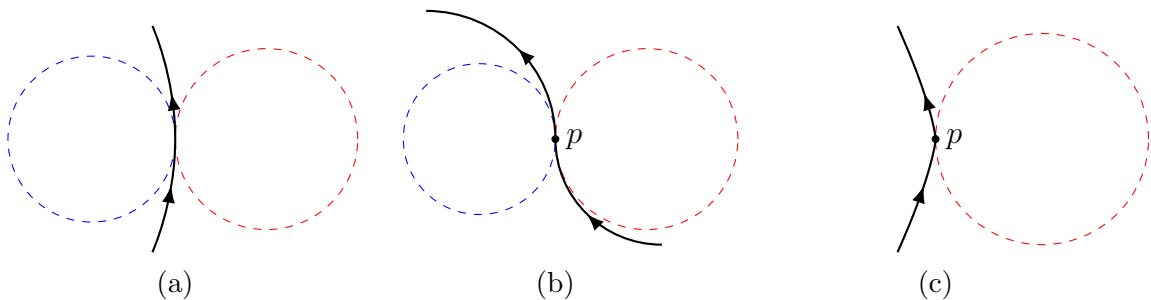
\begin{figure}[b]
\begin{tikzpicture}
\begin{scope}[xshift = -0cm, decoration={markings, mark=at position 0.5 with {\arrowreversed {Latex}}}]
\draw[dashed,blue] (-1.0,0) circle (1.0);
\draw[dashed,red] (1.2,0) circle (1.2);
\draw [postaction={decorate},thick] (0,0) arc (180:270:1.4);
\node[anchor=west] at (0,0) {$p$};
\draw[fill=black] (0,0) circle (.045);
\node[anchor=north] at (0,-1.6) {\small(b)};
\end{scope}
\begin{scope}[xshift = -0cm, decoration={markings, mark=at position 0.5 with {\arrow {Latex}}}]
\draw [postaction={decorate},thick] (0,0) arc (0:90:1.7);
\end{scope}
\begin{scope}[xshift = -5cm, decoration={markings, mark=at position 0.3 with {\arrow {Latex}},mark=at position 0.7 with {\arrow {Latex}}}]
\draw[dashed,blue] (-1.1,0) circle (1.1);
\draw[dashed,red] (1.2,0) circle (1.2);
\draw [postaction={decorate},thick] plot [smooth, tension = 1.0] coordinates {(-.3,-1.5) (0,0) (-.3,1.5)};
\node[anchor=north] at (0,-1.6) {\small(a)};
\end{scope}
\begin{scope}[xshift = 5cm, decoration={markings, mark=at position 0.3 with {\arrow {Latex}}, mark=at position 0.7 with {\arrow {Latex}}}]
\draw[dashed,red] (1.4,0) circle (1.4);
\draw[postaction={decorate},scale=0.5,domain=-1:1,smooth,variable=\y,thick]  plot ({-\y*\y*\y*\y},{3*\y*\y*\y});
\node[anchor=west] at (0,0) {$p$};
\draw[fill=black] (0,0) circle (.045);
\node[anchor=north] at (0,-1.6) {\small(c)};
\end{scope}
\end{tikzpicture}
\caption{
The curve in (a) is $C^{\infty}$. The curves in (b) and (c) are $C^1$ and piece-wise $C^2$ with a unique discontinuity at $p$. The upper and lower curvatures at $p$ for the curve in (b) are finite. The curve in (c) is given by the spherical projection of the plane curve $t\mapsto(-t^\frac{4}{3},t)$. Note that there does not exist a circle tangent from left to this curve at $p=(0,0)$. The upper and lower curvatures at $(0,0)$ are both $+\infty$.}
\label{fig:broader}
\end{figure}

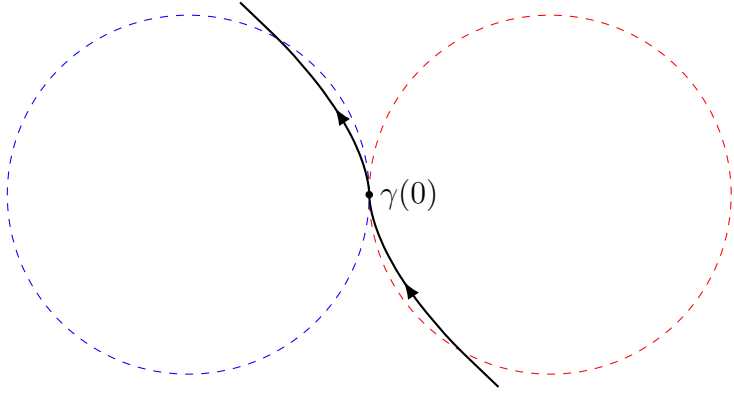
\begin{figure}[t]
\begin{tikzpicture}
\begin{scope}[scale=1.7, xshift = 5cm, decoration={markings, mark=at position 0.3 with {\arrow {Latex}}, mark=at position 0.7 with {\arrow {Latex}}}]
\draw[dashed,red] (1.4,0) circle (1.4);
\draw[dashed,blue] (-1.4,0) circle (1.4);
\draw[postaction={decorate},scale=0.5,domain=-1:1,smooth,variable=\y,thick]  plot ({-2*\y*\y*\y*\y*\y},{3*\y*\y*\y});
\node[anchor=west] at (0,0) {$\gamma(0)$};
\draw[fill=black] (0,0) circle (.025);
\end{scope}
\end{tikzpicture}
\caption{
The curve is the image of the spherical projection of the plane curve $\gamma(t)$ given by $t\mapsto (-t^\frac{5}{3},t)$. Note that there is neither a circle tangent from left nor from right to the curve at $(0,0)$. So, by definition, the upper and the lower curvatures in the point of inflection $\gamma(0)$ are $+\infty$ and $-\infty$, respectively.}
\label{fig:ctex}
\end{figure}

Following the same idea, one may define $C^2$ regular curves tangent from left and from right, and 
give alternative  definition of upper and lower curvatures for $C^1$-immersed curves. It is straightforward that the above definition of the curvatures are equivalent to:
\begin{align*}
\kappa_\gamma^+(t) &= \inf\left\{\kappa_\alpha(s);\text{where $\alpha$ is a $C^2$ regular curve tangent from left to $\gamma$ at $\alpha(s)=\gamma(t)$} \right\}.\\
\kappa_\gamma^-(t) &= \sup\left\{\kappa_\alpha(s);\text{where $\alpha$ is a $C^2$ regular curve tangent from right to $\gamma$ at $\alpha(s)=\gamma(t)$} \right\}.
\end{align*}
Definition \ref{definition4} is motivated by the concepts of upper and lower derivatives of continuous functions in Calculus. 
 For $C^2$ regular curves, upper and lower curvatures are equal to the \emph{geodesic curvature} which is defined by Equation \eqref{curv}. 

Now we take our attention on the curves in $\mathcal{I}$ which start and end at given points with given directions.   The \emph{Frenet frame} of $\gamma$ is defined by:
\begin{equation}\label{frenet}\mathfrak{F}_{\gamma}(t)=\left( \begin{array}{ccc} | & | & | \\
\gamma(t) & \vt_\gamma (t) & \bsm{n}_\gamma(t) \\
| & | & | \end{array} \right) \in \textrm{SO}_3(\mathbb{R}) .
\end{equation}

The space $\SO$ is homeomorphic to the unit tangent bundle of sphere $\textup{UT}\mathbb{S}^2$ by mapping the matrix $\bsm{M}\in\SO$ to the vector $\big(\mat{M}(1,0,0),\bsm{M}(0,1,0)\big) \in \textup{T}_{\bsm{M}(1,0,0)}\mathbb{S}^2$. Let $\mat{I}$ be the identity matrix in $\SO$. We define the following spaces of curves:

\begin{definition}\label{deflspace}
Given $\mP,\mQ\in \textrm{SO}_3(\mathbb{R})$, $\kappa_1,\kappa_2 \in [-\infty,+\infty]$, with $\kappa_1\leq\kappa_2$, we give the following notations of the sets:
\begin{itemize}
\item $\mathcal{I}(\mP,\mQ)$ denotes  the set of all $C^1$ regular curves in $\mathbb{S}^2$ with Frenet frames $\mathfrak{F}_\gamma(0)=\mP$ and $\mathfrak{F}_\gamma(1)=\mQ$. 
\item  $\mathcal{L}^{\kappa_2}_{\kappa_1}(\mP,\mQ)\subset\mathcal{I}(\mP,\mQ)$ denotes  subset of curves that satisfies $\kappa_1<\kappa_\gamma^-(t)\leq\kappa_\gamma^+(t)<\kappa_2$ for all $t \in [0,1]$. 
\item  $\mathcal{S}_{\kappa_1}^{\kappa_2}(\mat{P},\mat{Q})$ denotes the subset  of $\mathcal{L}_{\kappa_1}^{\kappa_2}(\mat{P},\mat{Q})$, in which every curve satisfies  $$\displaystyle\kappa_1<\inf_{t\in [0,1]}\kappa_\gamma^-(t)\leq \sup_{t\in [0,1]}\kappa_\gamma^+(t) < \kappa_2.$$
\end{itemize} 
\end{definition}

 As  stated in Introduction, let  $\gamma(t)$, $t\in [0,1]$, be a $C^1$ regular curve.  Suppose that $\dot{\gamma}(t)$ is Lipschitz continuous. Then it is known that $\ddot{\gamma}(t)$ exists for a.e. $t$.
With reparameterization with arc-length $s$, we have $\gamma'(s)$ is Lipschitz continuous for $s$. This implies that  $\gamma''(s)={\vet{t}}'(s)$ exists for a.e. $s$ and
$${\vet{t}}'(s)=-\gamma(s)+\kappa(s){\vet{n}}(s), \quad a.e. \quad s.$$
In this article, we study the following subset of $\mathcal{I}(\mP,\mQ)$:
\begin{definition}Let $\mathcal{P}_{\kappa_1}^{\kappa_2}(\mP, \mQ)$ be the subset of all curves 
$\gamma$ in $\mathcal{I}(\mP,\mQ)$ whose $\dot{\gamma}$ is Lipschitz continuous and whose geodesic curvature $\kappa(t)$ at a.e. $t$ satisfies 
$$\kappa_1<\essinf_{t\in[0,1]} \kappa(t)\leq \esssup_{t\in[0,1]} \kappa(t)<\kappa_2.$$
\end{definition}

\section{Local behavior of the curves in $\mathcal{L}_{\kappa_1}^{\kappa_2}(\mat{P},\mat{Q})$}\label{local}
The constraints to the lower and upper curvatures of a curve in $\mathcal{L}_{\kappa_1}^{\kappa_2}(\mat{P},\mat{Q})$  influences the local behavior of the curve which is stated in Lemma \ref{sandwich}. The geometrical intuition of this lemma is shown in Figure \ref{fig:lemma}. Roughly speaking,  the curve $\gamma(s)$ doesn't contact the circle tangent to $\gamma(s_0)$ from left with the radius $\rho_2=\arccot(\kappa_2)$ again for $s\in(s_0-\delta,s_0+\delta)$ for some $\delta>0$. The analogous property happens on the circle tangent to $\gamma(s_0)$ from right with the radius $\rho_1=\arccot(\kappa_1)$. More precisely, we prove that

\begin{figure}[b]
\begin{tikzpicture}
\def\r{1}
\def\s{1.4}
\def\a{114.591+90}
\def\b{114.591+90}
\def\c{-81.851-90}
\def\d{81.851-90}
\begin{scope}[scale=1.7, xshift = 5cm, decoration={markings, mark=at position 0.3 with {\arrow {Latex}}, mark=at position 0.7 with {\arrow {Latex}}}]
\draw[green!30,fill=green!30] 
({\r*(cos(deg(2)-\a)+2*sin(deg(2)-\a))}, {-1+\r*(sin(deg(2)-\a)-2*cos(deg(2)-\a))}) --
({\r*(cos(deg(0)-\a)+0*sin(deg(0)-\a))}, {-1+\r*(sin(deg(0)-\a)-0*cos(deg(0)-\a))}) --
({\r*(cos(deg(0)+\b)+0*sin(deg(0)+\b))}, {-1+\r*(sin(deg(0)+\b)-0*cos(deg(0)+\b))}) --
({\r*(cos(deg(-2)+\b)-2*sin(deg(-2)+\b))}, {-1+\r*(sin(deg(-2)+\b)+2*cos(deg(-2)+\b))}) --
({\s*(cos(deg(0)+\c)+0*sin(deg(0)+\c))}, {1.4+\s*(sin(deg(0)+\c)-0*cos(deg(0)+\c))}) --
({\s*(cos(deg(0)+\d)+0*sin(deg(0)+\d))}, {1.4+\s*(sin(deg(0)+\d)-0*cos(deg(0)+\d))}) -- cycle;
\draw[dashed,blue, domain=0:2, variable=\t, samples=100,fill=green!30] 
        plot[fixed point arithmetic] ({\r*(cos(deg(\t)-\a)+\t*sin(deg(\t)-\a))}, {-1+\r*(sin(deg(\t)-\a)-\t*cos(deg(\t)-\a))});
\draw[dashed,blue, domain=0:-2, variable=\t, samples=100,fill=green!30] 
        plot[fixed point arithmetic] ({\r*(cos(deg(\t)+\b)+\t*sin(deg(\t)+\b))}, {-1+\r*(sin(deg(\t)+\b)-\t*cos(deg(\t)+\b))});
\draw[dashed,red, domain=0:2*0.715, variable=\t, samples=100,fill=green!30] 
        plot[fixed point arithmetic] ({\s*(cos(deg(\t)+\c)+\t*sin(deg(\t)+\c))}, {1.4+\s*(sin(deg(\t)+\c)-\t*cos(deg(\t)+\c))});
\draw[dashed,red, domain=0:-2*0.715, variable=\t, samples=100,fill=green!30] 
        plot[fixed point arithmetic] ({\s*(cos(deg(\t)+\d)+\t*sin(deg(\t)+\d))}, {1.4+\s*(sin(deg(\t)+\d)-\t*cos(deg(\t)+\d))});
\draw[dashed,red,fill=white] (0,1.4) circle (1.4);
\draw[dashed,blue,fill=white] (0,-1) circle (1);
\draw[postaction={decorate},scale=1.5,domain=-1:1,smooth,variable=\y,thick] (0,0) .. controls (1,0) and (1.2,-0.1) .. (1.95,0.5);
\draw[postaction={decorate},scale=1.5,domain=-1:1,smooth,variable=\y,thick] (-2.0,-0.2) .. controls (-1.2,0.5) and (-0.76,0) .. (0,0);
\node[anchor=south] at (0,0) {$\gamma(s_0)$};
\draw[fill=black] (0,0) circle (.025);
\node[anchor=south] at (0,1.4) {$\vet{v}_2$};
\draw[fill=black] (0,1.4) circle (.025);
\node[anchor=south] at (0,-1) {$-\vet{v}_1$};
\draw[fill=black] (0,-1) circle (.025);
\end{scope}
\end{tikzpicture}
\caption{Intuitively, the statement \eqref{circineq1} implies that for $s$ in the suitable interval (for instance, given by Lemma \ref{sandwich}), the curve $\gamma$ is sandwiched into the green region delimited by the involute of the tangent circles that have radius $\rho_2$ and $\rho_1$ centered at $\vet{v}_2$ and $\vet{v}_1$ respectively. The second inequality of the statements \eqref{circineq2} and \eqref{circineq3} mean that the distance from $\vet{v}_2$ to $\gamma(s)$ is non-decreasing as $s$ grows  in the respective interval. In the other words, the tangent vector $\vet{t}_\gamma(s)$ lies in the South hemisphere of the sphere with the North direction pointed to $\vet{v}_2$.}
\label{fig:lemma}
\end{figure}
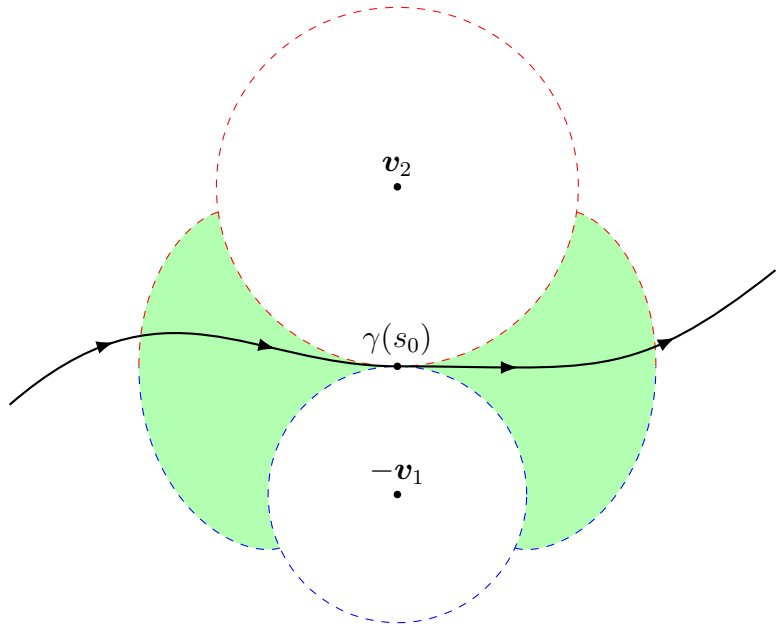

\begin{lemma}\label{sandwich} Consider $-\infty < \kappa_1 < \kappa_2 < +\infty $. Let $\gamma\in\mathcal{L}_{\kappa_1}^{\kappa_2}(\mP,\mQ)$, $\gamma: [0,L_\gamma] \to \mathbb{S}^2$ is parameterized by its arc-length. Let $s_0\in [0,L_\gamma]$, $\rho_1\coloneqq\arccot(\kappa_1)$, $\rho_2\coloneqq\arccot(\kappa_2)$, $\vet{v}_1\coloneqq(\cos\rho_1)\gamma(s_0)+(\sin\rho_1)\vet{n}_\gamma(s_0)$ and $\vet{v}_2\coloneqq(\cos\rho_2)\gamma(s_0)+(\sin\rho_2)\vet{n}_\gamma(s_0)$. Moreover, let $\delta=\min\left\{2\sin\rho_1,2\sin\rho_2\right\}$ and $\bar\delta=\min\left\{\frac{\pi}{2}\sin\rho_1,\frac{\pi}{2}\sin\rho_2\right\}$. Then
\begin{equation}\label{circineq1}
d\left(\vet{v}_1,\gamma(s)\right)\leq\rho_1 \quad\text{and}\quad d\left(\vet{v}_2,\gamma(s)\right)\geq\rho_2,\quad\text{for all }s\in\left[s_0-\delta,s_0+\delta\right]\cap [0,L_\gamma].
\end{equation}
Furthermore,
\begin{equation}\label{circineq2}
\left\langle \vet{t}_\gamma(s),\vet{v}_1 \right\rangle \geq 0\quad\text{and}\quad\left\langle \vet{t}_\gamma(s),\vet{v}_2 \right\rangle \leq 0,\quad\text{for all } s\in\left[s_0,s_0+\bar{\delta}\right]\cap [0,L_\gamma],
\end{equation}
and
\begin{equation}\label{circineq3}
\left\langle \vet{t}_\gamma(s),\vet{v}_1 \right\rangle \leq 0\quad\text{and}\quad\left\langle \vet{t}_\gamma(s),\vet{v}_2 \right\rangle \geq 0,\quad\text{for all } s\in\left[s_0-\bar{\delta},s_0\right]\cap [0,L_\gamma]. 
\end{equation}
\end{lemma}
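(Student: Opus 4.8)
The plan is to prove the four inequalities by comparing $\gamma$ with the circles of radius $\rho_1$ and $\rho_2$ tangent to it, after first collapsing the four statements to one by symmetry. The configuration carries two obvious symmetries: reversal of the parameter about $s_0$ (which sends $\vet{t}_\gamma$ to $-\vet{t}_\gamma$ and exchanges the forward and backward intervals) and reflection of $\mathbb{S}^2$ across the great circle tangent to $\gamma$ at $\gamma(s_0)$ (an orientation-reversing isometry fixing $\gamma(s_0)$ and $\vet{t}_\gamma(s_0)$). Each of these negates every geodesic curvature and flips $\vet{n}_\gamma(s_0)$, and suitable compositions preserve the class $\mathcal{L}_{\kappa_1}^{\kappa_2}(\mP,\mQ)$ while permuting the four inequalities; since $\sin(\pi-\rho)=\sin\rho$, the constants $\delta$ and $\bar\delta$ are invariant under the interchange of the two bounds. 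Hence it suffices to prove the two assertions involving $\vet{v}_2$ on the forward interval. I record the $C^1$ identities I will use: with $u(s):=\langle\vet{v}_2,\gamma(s)\rangle$ one has $u(s_0)=\cos\rho_2$, $u'(s)=\langle\vet{v}_2,\vet{t}_\gamma(s)\rangle$, and $u'(s_0)=0$; writing $\theta_2(s):=d(\vet{v}_2,\gamma(s))\in(0,\pi)$, the inequality $\langle\vet{t}_\gamma(s),\vet{v}_2\rangle\le 0$ holds exactly when $\theta_2$ is non-decreasing, and $d(\vet{v}_2,\gamma(s))\ge\rho_2$ holds exactly when $u(s)\le\cos\rho_2$.

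The first ingredient is a pointwise comparison extracted from Definition \ref{definition4}. Since $\kappa_\gamma^+(t)<\kappa_2=\cot\rho_2$ for every $t$ and $\cot$ is strictly decreasing on $(0,\pi)$, there is, at each $t$, a circle tangent from left to $\gamma$ at $\gamma(t)$ of radius $r>\rho_2$. I claim the radius-$\rho_2$ circle $C_2(t)$ tangent at $\gamma(t)$, centered at $\vet{w}(t):=(\cos\rho_2)\gamma(t)+(\sin\rho_2)\vet{n}_\gamma(t)$, is then also tangent from left. Writing the tangency-from-left condition for the radius-$r$ circle as $\langle\gamma(s),(\cos r)\gamma(t)+(\sin r)\vet{n}_\gamma(t)\rangle\le\cos r$ and setting $A=\langle\gamma(t),\gamma(s)\rangle\le 1$, $B=\langle\vet{n}_\gamma(t),\gamma(s)\rangle$, this reads $\sin r\,B\le\cos r\,(1-A)$; a short computation shows that if it holds for some $r>\rho_2$ (for $s$ near $t$) it also holds for $\rho_2$, since the smaller circle on the same side is more curved. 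Thus for each $t$ there is $\eta_t>0$ with $d(\gamma(s),\vet{w}(t))\ge\rho_2$ whenever $|s-t|<\eta_t$; in particular $\vet{w}(s_0)=\vet{v}_2$, and $\gamma$ stays locally outside each of the rolling circles $C_2(s)$.

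The heart of the matter, and the step I expect to be the main obstacle, is to upgrade these purely local exclusions to the explicit intervals $[s_0-\delta,s_0+\delta]$ and $[s_0,s_0+\bar\delta]$. A naive pointwise argument does not close: the bound $\kappa_\gamma^+<\kappa_2$ does permit $\theta_2$ to turn around once $\theta_2(s)>\rho_2$, because the latitude circle about $\vet{v}_2$ of radius $\theta_2(s)$ then has geodesic curvature $\cot\theta_2(s)<\kappa_2$, which the curve is allowed to attain. What must be shown is that such a turnaround cannot happen too early, and this is genuinely a statement about the whole trajectory rather than about one point. I would prove it by a first-contact argument combined with comparison against the extremal circular arcs of curvature $\kappa_2$: requiring $\gamma$ to remain outside \emph{every} rolling circle $C_2(s)$ traps it in the region swept out by these extremal arcs (the shaded region of Figure \ref{fig:lemma}, bounded by the involutes of the tangent circles), and for the limiting arc of curvature $\to\kappa_2$ one computes directly that the distance to $\vet{v}_2$ first ceases to increase only after arc-length $\pi\sin\rho_2$, and returns to the value $\rho_2$ only after the half-circumference $\pi\sin\rho_2$. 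Since $\bar\delta=\tfrac{\pi}{2}\sin\rho_2<\pi\sin\rho_2$ and $\delta=2\sin\rho_2<\pi\sin\rho_2$, both conclusions follow with a comfortable margin; the chosen constants are deliberately non-sharp so that the comparison, applied to a general (merely $C^1$) admissible curve rather than an exact circular arc, can be carried through by a continuity argument propagating the exclusions of the previous paragraph. Isolating and rigorously justifying this extremal comparison for arbitrary admissible curves is the delicate part.

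Finally, the remaining inequalities in \eqref{circineq1}, \eqref{circineq2} and \eqref{circineq3}—those governing $\vet{v}_1$ via the lower bound $\kappa_\gamma^->\kappa_1$, together with the backward-interval statements—are deduced from the two established assertions by applying the parameter reversal and the reflection described in the first paragraph, using that ``staying outside the circle of radius $\pi-\rho_1$ about a point'' is the same as ``staying inside the circle of radius $\rho_1$ about its antipode,'' and that $\sin\rho_1$ enters the definitions of $\delta$ and $\bar\delta$ symmetrically with $\sin\rho_2$. This completes the plan.
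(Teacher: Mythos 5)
Your symmetry reduction to the two $\vet{v}_2$-inequalities on the forward interval is legitimate (the antipode/reflection bookkeeping works out, and the paper makes the same reduction by declaring the $\vet{v}_1$ and backward cases analogous), and your ``first ingredient'' --- that $\kappa_\gamma^+(t)<\kappa_2$ forces the radius-$\rho_2$ circle centered at $\vet{w}(t)$ to be locally excluded, by nesting of internally tangent disks --- is precisely Claim 1 of the paper's proof. But your proof stops exactly where the lemma begins. The whole content of the statement is the passage from these local exclusions, each valid only on an uncontrolled neighborhood $|s-t|<\eta_t$, to the quantitative conclusions on the explicit intervals $[s_0-\delta,s_0+\delta]$ and $[s_0,s_0+\bar\delta]$. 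You describe this step as a ``first-contact argument combined with comparison against the extremal circular arcs,'' assert that the local exclusions ``trap'' the curve in the region swept by those arcs, and then concede that justifying this comparison for arbitrary admissible curves ``is the delicate part.'' That delicate part \emph{is} the theorem: no such comparison principle is available off the shelf for curves that are merely $C^1$ with curvature bounds in the sense of Definition \ref{definition4}, and the trapping claim is essentially a restatement of \eqref{circineq1}, so the argument as written is circular at its core. Note also that the extremal object you propose to compare with is degenerate: the arc of curvature exactly $\kappa_2$ tangent from the left at $\gamma(s_0)$ is the circle centered at $\vet{v}_2$ itself, along which the distance to $\vet{v}_2$ is constant; your limiting computation concerns circles with centers \emph{different} from $\vet{v}_2$, and by itself says nothing about $\gamma$.

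Concretely, what is missing is the mechanism the paper supplies. For \eqref{circineq1} the paper argues by contradiction with a \emph{rotating} family of circles: if $\bar{s}_0$ is the first time $\gamma$ enters the open disk of radius $\rho_2$ about $\vet{v}$, one considers the circles $\zeta_\tau$, all of radius $\rho_2$ and all passing through $\gamma(0)$, whose centers $\vet{v}_\tau$ rotate about the axis through $\gamma(0)$; rotating until the critical parameter $\tau_0$, the curve becomes tangent to $\zeta_{\tau_0}$ at an interior point $\gamma(s_{\tau_0})$ while lying locally inside it, which forces $\kappa_\gamma^+(s_{\tau_0})\geq\cot\rho_2=\kappa_2$ and contradicts $\gamma\in\mathcal{L}_{\kappa_1}^{\kappa_2}(\mP,\mQ)$. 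Moreover, \eqref{circineq2} does not follow from the distance estimate by your extremal-arc heuristic; it needs its own argument, which in the paper is a spherical-triangle comparison: taking the first $\bar{s}_0$ at which $\langle\vet{t}_\gamma(\cdot)-\vet{t}_\gamma(0),\vet{v}\rangle$ vanishes before turning positive, one uses \eqref{circineq1} and an isosceles-triangle angle estimate to show $\bar{s}_0>\frac{\pi}{2}\sin\rho_2$, a contradiction. Neither of these steps, nor any rigorous substitute for them, appears in your proposal, so the proof has a genuine gap at its central point.
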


\begin{proof} We denote $\vet{v}=\vet{v}_2$. Up to a rotation and a change of parameterization, we assume without loss of generality that 
\begin{equation*}
s_0=0, \quad \gamma(0)=(1,0,0) \quad\text{and}\quad \vet{t}_\gamma(0)=(0,1,0).
\end{equation*}
We present the demonstration only for the inequalities for $\vet{v}_2$ on each case. The other inequalities for $\vet{v}_1$ hold by analogous demonstration. 



Moreover, we restrict the proof of the conclusion \eqref{circineq1} for positive $s$ values ($s\in [0,\delta]\cap I$). For $s$ negative ($s\in [-\delta,0]\cap I$) the demonstration is analogous. 

Suppose, by contradiction, that there exists some $\bar{s}\in [0,\delta]\cap I$ such that $d(\vet{v},\gamma(\bar{s}))<\rho_2$.  Then take
\begin{equation*}
\bar{s}_0\coloneqq\inf\left\{s\in [0,\delta]\cap I ; d(\vet{v},\gamma(s))<\rho_2\right\}.
\end{equation*}
{\bf Claim 1.}  There exists a $\sigma>0$ very small such that 
\begin{equation*}
d(\vet{v},\gamma(s))\geq\rho_2,\quad\text{for all $s\in (-\sigma,\sigma)\cap I$.}
\end{equation*}
Since $\kappa^{+}_{\gamma}(0)<\kappa_2=\cot\rho_2$, there exists a $r$ such that $\rho_2<r<\arccot \kappa^{+}_{\gamma}(0) $. Then there is a circle in $\mathbb{S}^2$ with the center $a$ and the radius $r$ which is tangent to $\gamma$ at $\gamma(0)$ from the left. So $$d(a, \gamma(s)\geq r>\rho_2, \quad s\in (-\sigma,\sigma)\cap I.$$
Note that $a$, $\vet{v}$ and $\gamma(0)$ are in the same great circle. So $d(a,\vet{v})=d(a,\gamma(0))-d(\vet{v},\gamma(0))=r-\rho_2$. This implies that
$$d(\vet{v}, \gamma(s))\geq  d(a, \gamma(s))-d(a,\vet{v})\geq r-(r-\rho_2)=\rho_2.$$
Thus we have proved Claim 1. 

By Claim 1, $\bar{s}_0>0$. Moreover
\begin{equation*}
d(\vet{v},\gamma(\bar{s}_0))=\lim_{s\to\bar{s}_0}d(\vet{v},\gamma(s)) = \rho_2.
\end{equation*}
For each $\tau\in\mathbb{R}$, we consider the circle $\zeta_\tau$ with radius $\rho_2$ centered at 
\begin{equation*}
\vet{v}_\tau\coloneqq\left(\cos\rho_2,(\sin\rho_2)(\sin\tau),(\sin\rho_2)(\cos\tau)\right).
\end{equation*}
 Note that, in particular, $\vet{v}_0 = \vet{v}$. By continuity, the intersection of the curve $\gamma$ with each $\zeta_\tau$ consists in at least two points for $\tau$ sufficiently small (see Figure \ref{fig:lemma2}). It is straightforward that if $\langle(0,1,0),\gamma(\bar{s}_0)\rangle\leq 0$ then $s_0 > \min\{2\sin\rho_1,2\sin\rho_2\}$, so Inequality \eqref{circineq1} holds. So we assume $\langle(0,1,0),\gamma(\bar{s}_0)\rangle > 0$.

\begin{figure}[H]
\begin{tikzpicture}
\def\r{1}
\def\s{1.4}
\def\u{20.3}
\begin{scope}[scale=1.7, xshift = -2.1cm, decoration={markings, mark=at position 0.3 with {\arrow {Latex}}, mark=at position 0.7 with {\arrow {Latex}}}]
\draw[dashed,red] (0,1.4) circle (1.4);
\draw[postaction={decorate},scale=1.5,domain=-1:1,smooth,variable=\y,thick] (0,0) .. controls (1.5,0) and (1.6,1.3) .. (0.55,1.5);
\node[anchor=south] at (0,0) {$\gamma(0)$};
\draw[fill=black] (0,0) circle (.025);
\node[anchor=south] at (0,1.4) {$\vet{v}$};
\draw[fill=black] (0,1.4) circle (.025);
\node[anchor=south west] at (1.19,2.14) {$\gamma(\bar{s}_0)$};
\draw[fill=black] (1.19,2.14) circle (.025);
\end{scope}
\begin{scope}[scale=1.7, xshift = 2.1cm, decoration={markings, mark=at position 0.3 with {\arrow {Latex}}, mark=at position 0.7 with {\arrow {Latex}}}]
\draw[dashed,red] ({1.4*sin(\u)},{1.4*cos(\u)}) circle (1.4);
\draw[postaction={decorate},scale=1.5,domain=-1:1,smooth,variable=\y,thick] (0,0) .. controls (1.5,0) and (1.6,1.3) .. (0.55,1.5);
\node[anchor=south] at (0,0) {$\gamma(0)$};
\draw[fill=black] (0,0) circle (.025);
\node[anchor=south] at ({1.4*sin(\u)},{1.4*cos(\u)}) {$\vet{v}_{\tau_0}$};
\draw[fill=black] ({1.4*sin(\u)},{1.4*cos(\u)}) circle (.025);
\node[anchor=west] at (1.85,1.01) {$\gamma(s_{\tau_0})$};
\draw[fill=black] (1.85,1.01) circle (.025);
\end{scope}
\end{tikzpicture}
\caption{
The intersections between $\zeta_\tau$ (dashed red circles) and $\gamma$ are shown in the images above.}
\label{fig:lemma2}
\end{figure}
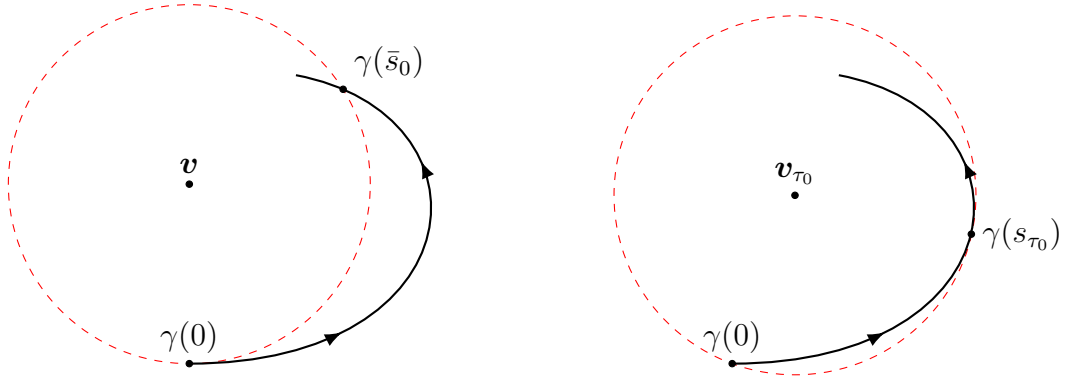

Moreover, there exists a $\tau_0>0$ such that for all $\tau\in[0,\tau_0]$ there exists a $s_\tau$ satisfying $d(\vet{v}_\tau,\gamma(s_\tau))=\rho_2$. Furthermore, $s_\tau=s(\tau)$ may be chosen as a strictly decreasing function and $\tau_0$ may be chosen as the smallest number satisfying such property. 

Since $s>\min\{2\sin(\rho_1),2\sin(\rho_2)\}$, $\gamma(s_{\tau_0})$ and $\gamma(0)$ are not two opposite points of the circle $\zeta_{\tau_0}$ nor $\gamma(0)=(1,0,0)=\gamma(s_{\tau_0})$. So evidently the tangent vector of the curve $\vet{t}_\gamma(s_{\tau_0})$ is also tangent to the circle $\zeta_{\tau_0}$ at $\gamma(s_{\tau_0})$ and $d(\vet{v}_{\tau_0},\gamma(s))\leq\rho_2$ for all $s\in(s_{\tau_0}-\bar{\epsilon}, s_{\tau_0}+\bar{\epsilon})$ for an $\bar{\epsilon}>0$ sufficiently small. Thus $\kappa_\gamma^+(s_{\tau_0})\geq\cot \rho_2 = \kappa_2 $ contradicting $\gamma\in\mathcal{L}_{\kappa_1}^{\kappa_2}(P,Q)$. So Inequality \eqref{circineq1} holds.

\vspace{.5em} 

Now we  prove \eqref{circineq2} using \eqref{circineq1}.   Suppose, by contradiction, that there exists some $\bar{s}\in\left[0,\bar{\delta}\right]$ such that $\left\langle \vet{t}_\gamma(\bar{s})-\vet{t}_\gamma(0),\vet{v} \right\rangle > 0$. By the continuity of $\vet{t}_\gamma$ there exists a $\bar{s}_0\in [0,\bar{s})$ such that:
\begin{equation*}
\left\langle \vet{t}_\gamma(s)-\vet{t}_\gamma(0),\vet{v} \right\rangle > 0 \quad \text{for all $s\in(\bar{s}_0,\bar{s}]$.}
\end{equation*}
Moreover $\bar{s}_0$ may be taken to be the smallest number satisfying such property.  By continuity:
\begin{equation*}
\left\langle \vet{t}_\gamma(\bar{s}_0)-\vet{t}_\gamma(0),\vet{v} \right\rangle = 0.
\end{equation*}
We shall deduce that $\bar{s}_0\geq\min\left\{\frac{\pi}{2}\sin\rho_2,2\sin\rho_1\right\}$ which leads to a contradiction. 

Firstly, we assert that $\bar{s}_0>0$. Otherwise if $\bar{s}_0=0$ then, since $\left\langle \vet{t}_\gamma(s),\vet{v} \right\rangle > 0$ for $s\in [0,\bar{s}]$,
\begin{equation*}
\frac{d}{ds}\big(d(\vet{v},\gamma(s))\big)=\frac{d}{ds}\arccos\big(\left\langle \vet{v},\gamma(s) \right\rangle\big) = -\frac{1}{\big(1-\langle\vet{v},\vet{t}_\gamma(s)\rangle^2\big)^{\frac{1}{2}}}\langle\vet{v},\vet{t}_\gamma(s)\rangle < 0.
\end{equation*}
This implies $d(\vet{v},\gamma(s))<d(\vet{v},\gamma(0))=\rho_2=\arccot\kappa_2$ for $s>0$ sufficiently small. Hence $\kappa^+_\gamma(0)\geq\kappa_2$, contradicting $\gamma\in\mathcal{L}_{\kappa_1}^{\kappa_2}(P,Q)$. Therefore $\bar{s}_0>0$.

Note that the set $\{\gamma(\bar{s}_0),\vet{t}_\gamma(\bar{s}_0),\vet{v}\}$ is linearly independent. Otherwise $\vet{v}=-\gamma(\bar{s}_0)$ (note that $\vet{v}\neq\gamma(\bar{s}_0)$ from the relation \eqref{circineq1}).  This implies that $\bar{s}_0\geq \min\{2\sin\rho_1,2\sin\rho_2\}$, which  does not happen by the definition of $\bar{s}_0$. So the basis $B\coloneqq\{\gamma(s),\vet{t}_\gamma(s),\vet{v}\}$ is either positive or negative. If $B$ is negative, then again $\bar{s}_0\geq \min\{2\sin\rho_1,2\sin\rho_2\}$, contradicting the definition of $\bar{s}_0$.

Next, we consider the great circle connecting $\vet{v}$ and $\gamma(\bar{s}_0)$. Let $\tilde{\vet{v}}$ be the center of the tangent circle of radius $\rho_2$ at $\gamma(\bar{s}_0)$. Then $\vet{v}$, $\tilde{\vet{v}}$ and $\gamma(\bar{s}_0)$ are in the same great circle. By Inequality \eqref{circineq1}, $d(\gamma(0),\tilde{\vet{v}})>\rho_2$. If the angle $\angle\gamma(0)\vet{v}\gamma(\bar{s}_0)$ is greater than $\frac{\pi}{2}$, then the arc from $\gamma(0)$ to $\gamma(\bar{s}_0)$ is greater than $\frac{\pi}{2}\sin(\rho_2)$. That is $\bar{s}_0>\frac{\pi}{2}\sin(\rho_2)$ contradicting the hypothesis  $\bar{s}_0\in \left[0,\frac{\pi}{2}\sin\rho_2\right]$. So $\angle\gamma(0)\vet{v}\gamma(\bar{s}_0)\leq\frac{\pi}{2}$.  Since $\tilde{\vet{v}}$ is on the great circle connecting $\vet{v}$ and $\gamma(\bar{s}_0)$. Consider a point $\vet{u}$ in this great circle such that the triangle $\triangle \gamma(0)\vet{v}\vet{u}$ is an isosceles triangle with $\angle\vet{v} = \angle\vet{u}$. Since $d(\tilde{\vet{v}},\gamma(0))\geq\rho_2$ (by \eqref{circineq1}), $\tilde{\vet{v}}$ lies outside of the segment $\vet{v}\vet{u}$ (in the triangle $\triangle \gamma(0)\vet{v}\vet{u}$). The previous assertion and the positivity of the basis $B\coloneqq\{\gamma(s),\vet{t}_\gamma(s),\vet{v}\}$ imply that 
\begin{equation*}
\angle\gamma(0)\tilde{\vet{v}}\gamma(\bar{s}_0)\geq \angle\gamma(0)\vet{u}\gamma(\bar{s}_0) = \pi - \angle\gamma(0)\vet{v}\gamma(\bar{s}_0) \geq \frac{\pi}{2}.
\end{equation*}
So again the arc from $\gamma(0)$ to $\gamma(\bar{s}_0)$ is greater than $\frac{\pi}{2}\sin(\rho_2)$. That is $\bar{s}_0>\frac{\pi}{2}\sin(\rho_2)$ contradicting the hypothesis  $\bar{s}_0\in \left[0,\frac{\pi}{2}\sin\rho_2\right]$. The proof of \eqref{circineq2} is complete.

The proof of the statement \eqref{circineq3} is analogous to \eqref{circineq2}. 
\end{proof}

\begin{remark}Observe  that the intervals of Lemma \ref{sandwich} are not optimal, a sharper result can be obtained with a more careful demonstration, but the current one is enough for the applications in this article.
\end{remark}

\section{Geometry of the curves in  $\mathcal{P}_{\kappa_1}^{\kappa_2}(\mat{P},\mat{Q})$}\label{geometry}

In this section we show that  the sets  $\mathcal{P}_{\kappa_1}^{\kappa_2}(\mat{P},\mat{Q})$ and  $\mathcal{S}_{\kappa_1}^{\kappa_2}(\mat{P},\mat{Q})$  are the same.

 \begin{theorem}\label{thm1-1}$\mathcal{P}_{\kappa_1}^{\kappa_2}(\mat{P},\mat{Q})=\mathcal{S}_{\kappa_1}^{\kappa_2}(\mat{P},\mat{Q})$
 \end{theorem}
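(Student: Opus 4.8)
The plan is to prove the set equality by establishing the two inclusions $\mathcal{S}_{\kappa_1}^{\kappa_2}(\mat{P},\mat{Q})\subseteq\mathcal{P}_{\kappa_1}^{\kappa_2}(\mat{P},\mat{Q})$ and $\mathcal{P}_{\kappa_1}^{\kappa_2}(\mat{P},\mat{Q})\subseteq\mathcal{S}_{\kappa_1}^{\kappa_2}(\mat{P},\mat{Q})$ separately. The two directions translate, respectively, a uniform everywhere bound on $\kappa^\pm_\gamma$ into a Lipschitz/$L^\infty$ statement about $\kappa_\gamma$, and an a.e.\ bound on $\kappa_\gamma$ back into an everywhere bound on $\kappa^\pm_\gamma$. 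Throughout I use the pointwise identity, valid at every $t$ where $\gamma$ is twice differentiable, that $\kappa^-_\gamma(t)\le\kappa_\gamma(t)\le\kappa^+_\gamma(t)$, in fact with equality; this follows from a second order Taylor comparison of $\gamma$ with circles of slightly larger and slightly smaller radius than the osculating one, and is the pointwise refinement of the remark that $\kappa^\pm_\gamma$ coincide with the geodesic curvature on $C^2$ curves.

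\emph{Inclusion $\mathcal{S}\subseteq\mathcal{P}$.} Let $\gamma\in\mathcal{S}_{\kappa_1}^{\kappa_2}(\mat{P},\mat{Q})$ and write $M_1=\inf_t\kappa^-_\gamma(t)$ and $M_2=\sup_t\kappa^+_\gamma(t)$, so $\kappa_1<M_1\le M_2<\kappa_2$ and $M_1\le\kappa^-_\gamma(t)\le\kappa^+_\gamma(t)\le M_2$ for every $t$. Choosing $\kappa_1<M_1'<M_1\le M_2<M_2'<\kappa_2$ we have $\gamma\in\mathcal{L}_{M_1'}^{M_2'}(\mat{P},\mat{Q})$, so by Lemma \ref{sandwich} and inequalities \eqref{circineq1}--\eqref{circineq3} the curve is trapped, on a neighborhood of fixed size, between the circles of radii $\rho_1'=\arccot M_1'$ and $\rho_2'=\arccot M_2'$ tangent from the right and from the left at $\gamma(s_0)$. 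First I would use this trapping to compare the turning of $\vet{t}_\gamma$ with the turning of these two circular arcs, whose unit tangents rotate at the bounded rates $\cot\rho_1'$ and $\cot\rho_2'$; this forces $s\mapsto\vet{t}_\gamma(s)$ (arc-length parameter) to be Lipschitz. Consequently $\gamma$ is twice differentiable for a.e.\ $s$, $\kappa_\gamma(s)$ exists a.e., and at each such point the pointwise identity gives $M_1\le\kappa^-_\gamma(s)=\kappa_\gamma(s)=\kappa^+_\gamma(s)\le M_2$. Hence $\kappa_1<M_1\le\essinf\kappa_\gamma\le\esssup\kappa_\gamma\le M_2<\kappa_2$, so $\gamma\in\mathcal{P}_{\kappa_1}^{\kappa_2}(\mat{P},\mat{Q})$.

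\emph{Inclusion $\mathcal{P}\subseteq\mathcal{S}$.} Let $\gamma\in\mathcal{P}_{\kappa_1}^{\kappa_2}(\mat{P},\mat{Q})$, put $\kappa_2'=\esssup\kappa_\gamma<\kappa_2$ and $\kappa_1'=\essinf\kappa_\gamma>\kappa_1$, and fix $s_0$. I would show $\kappa^+_\gamma(s_0)\le\kappa_2'$, the bound $\kappa^-_\gamma(s_0)\ge\kappa_1'$ being symmetric. Fix any radius $r\in(0,\arccot\kappa_2')$, so that $\cot r>\kappa_2'\ge\kappa_\gamma$ a.e., and let $\vet{w}=(\cos r)\gamma(s_0)+(\sin r)\vet{n}_\gamma(s_0)$ be the center of the radius-$r$ circle tangent to $\gamma$ at $\gamma(s_0)$. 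Setting $x=\langle\vet{w},\gamma\rangle$, $y=\langle\vet{w},\vet{t}_\gamma\rangle$, $z=\langle\vet{w},\vet{n}_\gamma\rangle$, the Frenet relations give the a.e.\ linear system $x'=y$, $y'=-x+\kappa_\gamma z$, $z'=-\kappa_\gamma y$ with $x(s_0)=\cos r$, $y(s_0)=0$, $z(s_0)=\sin r>0$. The leading part of $y'$ near $s_0$ equals $(\sin r)(\kappa_\gamma-\cot r)\le(\sin r)(\kappa_2'-\cot r)<0$, a quantity bounded away from $0$; on a sufficiently small neighborhood (where $z>0$) a Gronwall-type estimate controls the lower-order coupling and forces $y\le 0$ for $s\in[s_0,s_0+\delta]$ and $y\ge 0$ for $s\in[s_0-\delta,s_0]$. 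Since $x'=y$, this yields $x(s)\le\cos r$, i.e.\ $d(\vet{w},\gamma(s))\ge r$, so the radius-$r$ circle is tangent from the left and $\kappa^+_\gamma(s_0)\le\cot r$. Letting $r\uparrow\arccot\kappa_2'$ gives $\kappa^+_\gamma(s_0)\le\kappa_2'<\kappa_2$; symmetrically $\kappa^-_\gamma(s_0)\ge\kappa_1'>\kappa_1$. As $s_0$ is arbitrary, $\gamma\in\mathcal{S}_{\kappa_1}^{\kappa_2}(\mat{P},\mat{Q})$.

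The main obstacle is the second inclusion: upgrading the essential bound on $\kappa_\gamma$ to an everywhere bound on $\kappa^\pm_\gamma$. At points where $\gamma$ fails to be twice differentiable the convenient identity $\kappa_\gamma=\kappa^\pm_\gamma$ is unavailable, so one must run the genuine comparison argument above with $\kappa_\gamma\in L^\infty$ only. The delicate point is the borderline tangency at radius exactly $\arccot\kappa_2'$, which I circumvent by testing against circles of radius $r$ strictly smaller than $\arccot\kappa_2'$, thereby creating a uniform gap $\cot r-\kappa_2'>0$ that dominates the coupling between $x$, $y$, $z$ over a short interval. Establishing the Lipschitz regularity of $\vet{t}_\gamma$ in the first inclusion is the other point requiring care, but it follows fairly directly from the trapping provided by Lemma \ref{sandwich}.
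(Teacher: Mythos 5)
Your proposal is correct and follows essentially the same route as the paper: for $\mathcal{P}_{\kappa_1}^{\kappa_2}(\mat{P},\mat{Q})\subseteq\mathcal{S}_{\kappa_1}^{\kappa_2}(\mat{P},\mat{Q})$ the paper likewise compares $\gamma$ with a tangent circle of radius strictly smaller than $\arccot(\esssup\kappa)$ (it fixes $\rho=\arccot\bigl(\tfrac{\kappa_2+\bar{\kappa}_2}{2}\bigr)$ and shows $g_1(s)=\langle\gamma(s),\vet{v}_1\rangle$ has a local maximum via a Taylor/big-$O$ estimate on $\int_0^s\vet{t}'_\gamma$, which is the integral analogue of your Gronwall step for $y=\langle\vet{w},\vet{t}_\gamma\rangle$), and for $\mathcal{S}_{\kappa_1}^{\kappa_2}(\mat{P},\mat{Q})\subseteq\mathcal{P}_{\kappa_1}^{\kappa_2}(\mat{P},\mat{Q})$ it also uses Lemma \ref{sandwich} to bound the difference quotients of $\vet{t}_\gamma$ componentwise and conclude (via a covering argument) that $\vet{t}_\gamma$ is Lipschitz, then reads off the a.e.\ curvature bounds exactly as you do. The differences are cosmetic: your limiting radius $r\uparrow\arccot\kappa_2'$ versus the paper's single midpoint radius, and your ODE phrasing of the comparison versus the paper's explicit expansion.
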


\begin{proof} {\bf Part 1}.  We show that $\mathcal{P}_{\kappa_1}^{\kappa_2}(\mat{P},\mat{Q})\subseteq\mathcal{S}_{\kappa_1}^{\kappa_2}(\mat{P},\mat{Q})$. 

Let  $t_0\in [0,1]$. 
We let $\gamma$ be parameterized by arc-length $s$. So $s_0=s(t_0)$. We write, for simplicity, that $s_0=0$.
For  a $\rho\in (0,\pi)$, we consider the circle $\zeta$ in $\mathbb{S}^2$   of radius $\rho$ with  the center  given by 
$$\vet{v}_1 = \gamma(0)\cos\rho + \vet{n}_\gamma(0)\sin\rho .$$
The circle $\zeta$ satisfies that  $\vet{t}_\gamma'=\vet{t}_\zeta'$ at $\gamma(0)$.
 
We will determine  a value $\rho<\kappa^+(t_0)$ such that $\zeta$ is a left tangent circle of $\gamma$ at $\gamma(0)$. To do it,  we define  the function
$$ g_1(s)=\langle\gamma(s),\vet{v}_1\rangle.$$
{\bf Claim}.  $g_1$ has a local maximum at  $s=0$ for some $\rho\in (0,\pi)$. 

Firstly, from the definition $g_1'(0)=\langle\vet{t}_\gamma(0),\vet{v}_1\rangle =0$.  Now we give estimation on the second order variation at $s=0$:
$$\frac{g'_1(s)-g'_1(0)}{s} = \frac{1}{s}\left\langle\vet{t}_\gamma(s)-\vet{t}_\gamma(0),\vet{v}_1 \right\rangle .$$

 Since $\gamma'(s)={\vet{t}}_{\gamma}(s)$ is in $W^{1,\infty}[0, L_{\gamma}]$ and hence absolutely continuous, we have that
 $$ \vet{t}_\gamma(s) - \vet{t}_\gamma(0) = \int_0^s\vet{t}'_\gamma(u)\, du. $$
 In the above, $\vet{t}'_\gamma$ is defined for a.e. $s\in [0,L_\gamma]$.
 
By $\vet{t}'_\gamma(s)=-\gamma(s)+\kappa(s)\vet{n}_\gamma(s)$ for a.e. $s\in [0,L_\gamma]$,

\begin{equation}\label{eqt}
 \vet{t}_\gamma(s) - \vet{t}_\gamma(0)  = \int_0^s\left(-\gamma(u)+\kappa(s)\vet{n}_\gamma(u)\right)\, du .
\end{equation}

Recall the Taylor's formula, 
$$\gamma(s)=\gamma(0)+O(s).$$
$$\vet{t}_\gamma(s)=\vet{t}_\gamma(0)+O(s).$$
Here the notation of the big $O$  is  used to denote a map such that
$$ \limsup_{s\to 0}\frac{\|O(s)\|}{s} < \infty .$$
Then
\begin{align*}
\vet{n}_\gamma(s) & = \gamma(s)\times\vet{t}_\gamma(s) \\
& = \big(\gamma(0)+O(s)\big)\times\big(\vet{t}_\gamma(0)+O(s)\big)\\
& = \vet{n}_\gamma(0) + O(s).
\end{align*}
Substituting the expressions into Equation \eqref{eqt} we obtain:
\begin{align*}
\vet{t}_\gamma(s)-\vet{t}_\gamma(0) & = \int_0^s \left(-\gamma(0)+O(u)+\kappa(u)\vet{n}_\gamma(0)+\kappa(u)O(u)\right)\, du \\
& = \int_0^s \left(-\gamma(0)+\kappa(u)\vet{n}_\gamma(0)+O(u)\right)\,du
\end{align*}
Here we use the boundedness of $\displaystyle \kappa(s)$ for a.e. $s$.
Thus
\begin{align}\nonumber
\frac{g_1'(s)-g_1'(0)}{s} & = \frac{1}{s}\left\langle\vet{t}_\gamma(s)-\vet{t}_\gamma(0),\vet{v}_1\right\rangle \\ \nonumber
& = \frac{1}{s}\left\langle \int_0^s \left(-\gamma(0)+\kappa(u)\vet{n}_\gamma(0)+O(u)\right)du , \gamma(0)\cos\rho+\vet{n}_\gamma(0)\sin\rho \right\rangle \\ \label{eqlast}
& = \frac{1}{s}\left(\int_0^s \left(\kappa(u)\sin\rho-\cos\rho\right)\,du + \left\langle \int_0^s O(u)\, du,\vet{v}_1 \right\rangle \right). 
\end{align} 
We have that
$$\lim_{s\to 0}\frac{1}{s}\int_0^s O(u)du = 0.$$
By the definition of $\mathcal{P}_{\kappa_1}^{\kappa_2}(\mP,\mQ)$,   there exists a $\bar{\kappa}_2<\kappa_2$ such that $\kappa(s)<\bar{\kappa}_2$ for a.e. $s$.

Taking $\rho=\arccot\left(\frac{\kappa_2+\bar{\kappa}_2}{2}\right)$ in Equation \eqref{eqlast}, the first integral in the right side of \eqref{eqlast} is  strictly small than a negative number by the following computation: When $s>0$,
\begin{align*}
 \frac{1}{s}\int_0^s \left(\kappa(u)\sin\rho-\cos\rho\right)du&=\frac1s\int_0^s(\sin \rho)\left(\kappa(u)-\cot\rho\right)du\\
 &=\frac1s\int_0^s(\sin \rho)\left(\kappa(u)-\frac{\kappa_2+\bar{\kappa}_2}{2}\right)du\\
&\leq \frac{\sin \rho} s\int_0^s\left(-\frac{\kappa_2-\bar{\kappa}_2}{2}\right)du=-\frac{\kappa_2-\bar{\kappa}_2}{2}(\sin\rho)<0.
\end{align*}
The case $s<0$ is analogous. 

We conclude that:
$$\limsup_{s\to 0}\left(\frac{g_1'(s)-g_1'(0)}{s}\right) < 0  \quad \text{for the $\rho$ chosen above.}$$
Hence  $g_1$ has a local maximum at  $s=0$. Thus we have verified the claim.

 Since $g_1(\gamma(s)=\langle\gamma(s),\vet{v}_1\rangle=\cos\theta$, where $\theta$ is equal to the distance $d(\gamma(s) ,\vet{v}_1) $ between $\vet{v}_1$ and $\gamma(s)$  in $\mathbb{S}^2$. The claim  implies that $d(\gamma(s) ,\vet{v}_1)$  has a local minimum at $s=0$. Because $d(\gamma(0),\vet{v}_1)=\rho$, the circle with the center at $\vet{v}_1$ radius $\rho$ is a left tangent circle of the curve $\gamma$ at $s=0$. Thus $\kappa_\gamma^+(0)\leq \frac{\kappa_2+\bar{\kappa}_2}{2} < \kappa_2 $. Since $\frac{\kappa_2+\bar{\kappa}_2}{2}$ is independent of $s_0$, $\sup_{s}\kappa_\gamma^+(0)<\kappa_2$.
 
 The proof of  $\kappa_1<\inf_{s}\kappa_\gamma^-(0)$ is analogous. This finishes  Part $1$.

{\bf  Part 2}.  We show that  $\mathcal{S}_{\kappa_1}^{\kappa_2}(\mat{P},\mat{Q})\subseteq\mathcal{P}_{\kappa_1}^{\kappa_2}(\mat{P},\mat{Q})$. 

Given a $C^1$ curve $\gamma\in\mathcal{S}_{\kappa_1}^{\kappa_2}(\mat{P},\mat{Q})$, we assume without loss of generality that it is parameterized by arc-length $s$ and we shall prove that $\vet{t}_\gamma$ is a Lipschitz continuous function. 

For $s_0\in[0,L_\gamma]$, let  $\rho = \arccot{\kappa}_2$, $\vet{v}=\gamma(s_0)\cos\rho+\vet{n}_\gamma(s_0)\sin\rho$. Since $\kappa_\gamma^+(s_0)<\kappa_2$,  the circle with center $\vet{v}$ and radius $\rho$ is tangent to $\gamma$ from the left at $s_0$ and  the function $g(s)=\langle\gamma(s),\vet{v}_1\rangle$  has a local maximum at $s_0$.  
 Morover, by Lemma  \ref{sandwich},
\begin{align}\label{derivative} 
&\limsup_{h\to 0}\left\langle \frac{\vet{t}_\gamma(s_0+h)-\vet{t}_\gamma(s_0)}{h},\vet{v}\right\rangle=\limsup_{h\to 0}\left\langle \frac{\vet{t}_\gamma(s_0+h)}{h},\vet{v}\right\rangle
 \leq 0.
\end{align}

$\langle \vet{t}_\gamma(s),\vet{t}_\gamma(s) \rangle = \|\gamma'(s)\|^2 = 1$ implies that
\begin{equation}\label{eqlip3}
\lim_{h\to 0}\left\langle \frac{\vet{t}_\gamma(s_0+h)-\vet{t}_\gamma(s_0)}{h},\vet{t}_\gamma(s_0) \right\rangle = 0. 
\end{equation}
By $ \langle \vet{t}_\gamma(s),\gamma(s) \rangle =0$,
\begin{align*}
0&=\lim_{h\to 0}\frac{\left\langle\vet{t}_\gamma(s_0+h),\gamma(s_0+h)\rangle-\langle\vet{t}_\gamma(s_0),\gamma(s_0)\right\rangle}{h}\\
 &=\lim_{h\to 0}\frac{\langle\vet{t}_\gamma(s_0+h)-\vet{t}_\gamma(s_0),\gamma(s_0)\rangle}{h}+\lim_{h\to 0}\frac{\langle\vet{t}_\gamma(s_0+h),\gamma(s_0+h)-\gamma(s_0)\rangle}{h}\\
 &=\lim_{h\to 0}\left\langle\frac{\vet{t}_\gamma(s_0+h)-\vet{t}_\gamma(s_0)}{h},\gamma(s_0)\right\rangle +\langle\vet{t}_\gamma(s_0),\vet{t}_\gamma(s_0)\rangle.
\end{align*}
So we obtain
\begin{equation}\label{eqlip4}
\lim_{h\to 0}\left\langle\frac{\vet{t}_\gamma(s_0+h)-\vet{t}_\gamma(s_0)}{h},\gamma(s_0)\right\rangle = -1.
\end{equation}
By  (\ref{derivative}), we have that
\begin{align*}
0\geq &\limsup_{h\to 0}\left\langle \frac{\vet{t}_\gamma(s_0+h)-\vet{t}_\gamma(s_0)}{h},\vet{v}_1\right\rangle\\
& = \limsup_{h\to 0}\left\langle \frac{\vet{t}_\gamma(s_0+h)-\vet{t}_\gamma(s_0)}{h},\gamma(s_0)\cos\rho+\vet{n}_\gamma(s_0)\sin\rho\right\rangle\\
&= -\cos\rho+(\sin\rho)\limsup_{h\to 0}\left\langle \frac{\vet{t}_\gamma(s_0+h)-\vet{t}_\gamma(s_0)}{h},\vet{n}_\gamma(s_0)\right\rangle. 
\end{align*}
In the last equality above, we used (\ref{eqlip4}). Then
\begin{equation}\label{eqlip1}
\limsup_{h\to 0}\left\langle \frac{\vet{t}_\gamma(s_0+h)-\vet{t}_\gamma(s_0)}{h},\vet{n}_\gamma(s_0) \right\rangle \leq \coth\rho= \kappa_2. 
\end{equation}
Analogously, from $\kappa_1<\kappa_\gamma^-(s)$, we deduce:
\begin{equation}\label{eqlip2}
\kappa_1  \leq \liminf_{h\to 0}\left\langle \frac{\vet{t}_\gamma(s_0+h)-\vet{t}_\gamma(s_0)}{h},\vet{n}_\gamma(s_0) \right\rangle . 
\end{equation}

Since $s_0\in[0,L_\gamma]$ is arbitrary in Equations \eqref{eqlip3}, \eqref{eqlip4} and Inequalities \eqref{eqlip1}, \eqref{eqlip2}. We deduce that for each $s\in [0, L_{\gamma}]$, there is a $\delta_s>0$ such that for $h\in (-\delta_s, \delta_s)$, 
\begin{equation}\label{eqlip}\left|\vet{t}_\gamma(s+h)-\vet{t}_\gamma(s)\right|\leq M|h|,
\end{equation}
where $M$ is the uniform constant for all $s$.

Given any $0\leq s_1< s_2\leq L_{\gamma}$,   $[s_1, s_2]$ has a finite open covering $\{(\bar{s}_i-\delta_{\bar{s}_i}, \bar{s}_i)+\delta_{\bar{s}_i}\}$, $i=1, \ldots, m,$ satisfying Equation \eqref{eqlip} for all $h\in (-\delta_{\bar{s}_i}, \delta_{\bar{s}_i})$. Without lost of generality, we may suppose that $m\geq 2$ and these open sets are ordered 
such that $\bar{s}_1=s_1$, $\bar{s}_m=s_2$, $\bar{s}_i< \bar{s}_{i+1}$, $(\bar{s}_i-\delta_{\bar{s}_i}, \bar{s}_i+\delta_{\bar{s}_i})\cap  (\bar{s}_{i+1}-\delta_{\bar{s}_{i+1}}, \bar{s}_{i+1}+\delta_{\bar{s}_{i+1}})\neq \emptyset$, and  $(\bar{s}_i-\delta_{\bar{s}_i}, \bar{s}_i+\delta_{\bar{s}_i})\not\subseteq(\bar{s}_{j}-\delta_{\bar{s}_{j}}, \bar{s}_{j}+\delta_{\bar{s}_{j}})$ for all $i\neq j$.

 Choose $w_0<w_1<\ldots<w_m$ as $w_0=s_1, w_i\in [\bar{s}_i,\bar{s}_i+\delta_{\bar{s}_i})\cap  (\bar{s}_{i+1}-\delta_{\bar{s}_{i+1}},\bar{s}_{i+1}]$ for $i=1, \ldots, m-1,$ and $w_m=s_2$. Then
\begin{align*}
|\vet{t}_\gamma(s_2)-&\vet{t}_\gamma(s_1)| \\
&\leq |\vet{t}_\gamma(w_0)-\vet{t}_\gamma(w_1)|+|\vet{t}_\gamma(w_1)-\vet{t}_\gamma(w_2)|+\ldots+|\vet{t}_\gamma(w_{m-1})-\vet{t}_\gamma(w_m)|\\
&\leq M|w_1-w_0|+M|w_2-w_1|+\ldots+M|w_m-w_{m-1}|\\
&=M(w_1-w_0+w_2-w_1+\ldots+w_m-w_{m-1})\\
&=M(w_m-w_0)\\
&=M|s_2-s_1|.
\end{align*}

Hence $\vet{t}_\gamma(s)$ is  Lipschitz continuous in $[0, L_{\gamma}]$. Thus the derivative $\vet{t}'_\gamma(s)$ exists almost everywhere and
\begin{equation}
\vet{t}_\gamma(s+h)=\vet{t}_\gamma(s)+\int_s^{s+h}\vet{t}'_\gamma(\tau)\,d\tau.
\end{equation}
By the basic property of curves,  there exists a function $\kappa(s), s\in [0, L_{\gamma}]$ such that, for a.e. $s$,
$$ \vet{t}'_\gamma(s)= -\gamma(s)+\kappa(s)\vet{n}_\gamma(s),$$
and the point $s$ where $\vet{t}_\gamma'(s)$ exists, $\kappa(s)=\kappa^+(s)=\kappa^-(s)$ is the curvature of $\gamma(s)$.

By $\sup\kappa^+<\kappa_2$, we have that $\esssup \kappa(s)<\kappa_2$. Analogously, $\kappa_1<\esssup\kappa^-(s)$.

This means that  $\gamma\in \mathcal{P}_{\kappa_1}^{\kappa_2}(\mat{P},\mat{Q})$.

\end{proof}

\section{ $C^0$ and $C^1$ topologies of the space $\mathcal{P}_{\kappa_1}^{\kappa_2}(\mat{P},\mat{Q})$}\label{C-topology}

It is known that the topology induced by the metrics $d^0$ and $d^1$ are not the same in the space $\mathcal{I}(\mat{P},\mat{Q})$. However, once we restrict these metrics to $\mathcal{P}_{\kappa_1}^{\kappa_2}(\mat{P},\mat{Q})$, they become the same.
In this section,   we show that the topologies of the space $\mathcal{P}_{\kappa_1}^{\kappa_2}(\mat{P},\mat{Q})$ with $C^0$ and $C^1$ topology are equivalent.
Before doing this, we prove the following result which also is of independent interest.
\begin{proposition}\label{propconvlength} Given $-\infty<\kappa_1<\kappa_2<+\infty$, let  $\{\alpha_k\}_{k\in\mathbb{N}}$  be a sequence of $C^1$ regular curves in $\mathbb{S}^2$ whose upper and lower curvatures satisfy $\kappa_1<{\kappa}_{\alpha_k}^{-}(t)\leq {\kappa}_{\alpha_k}^{+}(t)<\kappa_2$, $t\in[0,1]$. Assume that $\{\alpha_k\}_{k\in\mathbb{N}}$ converges to a $C^1$ regular curve $\alpha$ in $\mathbb{S}^2$  in $d^0$ metric, where $\alpha$ has bounded upper and lower curvatures.   Let $L_{\alpha_k}$ be the length of $\alpha_k$  for each $k\in\mathbb{N}$  and $L_\alpha$  the length of $\alpha$ respectively.  Then $\displaystyle\lim_{k\to\infty}L_{\alpha_k}=L_\alpha$.
\end{proposition}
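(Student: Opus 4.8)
The plan is to reduce the statement to a compactness argument for the arc-length parameterizations, the crucial input being a \emph{uniform} Lipschitz bound on the unit tangent vectors. First I would record that each $\alpha_k$ lies in $\mathcal{L}_{\kappa_1}^{\kappa_2}$, so by the argument of Part 2 of the proof of Theorem \ref{thm1-1} — which invokes only the pointwise curvature bounds $\kappa_{\alpha_k}^+(t)<\kappa_2$, $\kappa_{\alpha_k}^-(t)>\kappa_1$ together with Lemma \ref{sandwich} (via the estimate \eqref{derivative}) — the arc-length reparameterization $\hat\alpha_k\colon[0,L_{\alpha_k}]\to\mathbb{S}^2$ has unit tangent $\vet{t}_{\alpha_k}=\hat\alpha_k'$ that is Lipschitz with constant $M:=\sqrt{1+\max\{\kappa_1^2,\kappa_2^2\}}$, a bound \emph{independent of} $k$. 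Indeed, wherever $\vet{t}_{\alpha_k}'$ exists one has $|\vet{t}_{\alpha_k}'(s)|=|-\hat\alpha_k(s)+\kappa(s)\vet{n}_{\alpha_k}(s)|=\sqrt{1+\kappa(s)^2}\le M$, since $\kappa\in[\kappa_1,\kappa_2]$ a.e. The same estimate applies to $\alpha$, which is assumed to have bounded curvatures.

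The second step is a uniform lower bound on chords over short arcs. If $M\eta\le 1$, then for every admissible $a$,
\[
\big\|\hat\alpha_k(a+\eta)-\hat\alpha_k(a)\big\|_{\mathbb{R}^3}\ge\Big\langle\int_a^{a+\eta}\vet{t}_{\alpha_k}(s)\,ds,\ \vet{t}_{\alpha_k}(a)\Big\rangle\ge\eta\big(1-\tfrac12 M^2\eta^2\big)\ge\tfrac{\eta}{2},
\]
using $\langle\vet{t}_{\alpha_k}(s),\vet{t}_{\alpha_k}(a)\rangle=1-\tfrac12|\vet{t}_{\alpha_k}(s)-\vet{t}_{\alpha_k}(a)|^2$ and $|\vet{t}_{\alpha_k}(s)-\vet{t}_{\alpha_k}(a)|\le M|s-a|$. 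I fix $\eta=1/M$. I would use this to prove $\sup_k L_{\alpha_k}<\infty$: if some subsequence had $L_{\alpha_{k_j}}\to\infty$, then, setting $h_j:=\eta/L_{\alpha_{k_j}}\to 0$, the points $\alpha_{k_j}(0)$ and $\alpha_{k_j}(h_j)$ are joined by an arc of length exactly $\eta$, so $d\big(\alpha_{k_j}(0),\alpha_{k_j}(h_j)\big)\ge\eta/2$ (surface distance dominates the Euclidean chord). But $d^0$-convergence and continuity of $\alpha$ force $d\big(\alpha_{k_j}(0),\alpha_{k_j}(h_j)\big)\to d(\alpha(0),\alpha(0))=0$, a contradiction. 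Hence the lengths are bounded.

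Third, I would invoke Arzelà–Ascoli. Extending each $\hat\alpha_k$ past $L_{\alpha_k}$ along a geodesic preserves the $M$-Lipschitz bound on the unit tangent, so on a fixed interval $[0,\Lambda]$ with $\Lambda>\sup_k L_{\alpha_k}$ the families $\{\hat\alpha_k\}$ and $\{\vet{t}_{\alpha_k}\}$ are uniformly bounded and uniformly equicontinuous. Thus any subsequence of $\{L_{\alpha_k}\}$ has a sub-subsequence along which $L_{\alpha_{k_j}}\to L^*\in[0,\infty)$ and $\hat\alpha_{k_j}\to\hat\alpha_*$ in $C^1$, with $\hat\alpha_*$ unit speed. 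Passing to the limit in $\alpha_{k_j}(t)=\hat\alpha_{k_j}(L_{\alpha_{k_j}}t)$ and comparing with $\alpha_{k_j}(t)\to\alpha(t)$ gives $\alpha(t)=\hat\alpha_*(L^* t)$ for all $t\in[0,1]$. Since $\alpha$ is regular we have $L^*>0$, so this exhibits $\alpha$ as a reparameterization of $\hat\alpha_*|_{[0,L^*]}$, whence $L_\alpha=\int_0^{L^*}|\hat\alpha_*'|=L^*$. As every subsequential limit of the bounded sequence $\{L_{\alpha_k}\}$ equals $L_\alpha$, the full sequence converges to $L_\alpha$.

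The \textbf{heart of the matter} is obtaining constants that are uniform in $k$: the Lipschitz constant $M$ underlies both the uniform chord estimate and the equicontinuity feeding Arzelà–Ascoli, and it is precisely this uniformity that prevents the $\alpha_k$ from oscillating on finer and finer scales while converging uniformly — the mechanism that would otherwise make length discontinuous. The remaining difficulty is bookkeeping: carefully matching the varying arc-length domains $[0,L_{\alpha_k}]$ when extracting the $C^1$ limit and when identifying $\hat\alpha_*$ with the arc-length reparameterization of $\alpha$.
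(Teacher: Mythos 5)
Your proof is correct, but it follows a genuinely different route from the paper's. The paper argues by contradiction with a direct quantitative estimate: writing each curve in its constant-speed parameterization on $[0,1]$, it applies Taylor's formula with integral remainder together with the Frenet relation $\ddot{\alpha}_k = L_{\alpha_k}^2\left(-\alpha_k + \kappa_{\alpha_k}\vet{n}_{\alpha_k}\right)$ to get the identity \eqref{equation-difference}, hence the pointwise lower bound
\begin{equation*}
|\alpha_k(t)-\alpha(t)| \;\geq\; |L_{\alpha_k}-L_\alpha|\,t \;-\; \tfrac{c}{2}\,t^2 \;-\; |\alpha_k(0)-\alpha(0)|,
\end{equation*}
and then evaluates at $t=\epsilon/c$ (in the case $L_{\alpha_k}\to A\neq L_\alpha$ finite) or at $t_k = 1/(2cL_{\alpha_k})$ (in the case $L_{\alpha_k}\to\infty$), contradicting $d^0$-convergence in either case. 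You instead run a compactness argument: the uniform-in-$k$ Lipschitz constant $M$ for the unit tangents (the same key input, justified exactly as you say by the argument of Part 2 of Theorem \ref{thm1-1}), a chord estimate that excludes $L_{\alpha_k}\to\infty$, and then Arzel\`a--Ascoli on the arc-length parameterizations to identify every subsequential limit of the bounded sequence $\{L_{\alpha_k}\}$ with $L_\alpha$. The trade-off: the paper's route is more elementary and quantitative, needing no extraction of convergent subsequences and giving an explicit lower bound for $d^0(\alpha_k,\alpha)$ in terms of $|L_{\alpha_k}-L_\alpha|$; your route is softer but buys more, since it produces $C^1$-subconvergence of the arc-length reparameterizations --- essentially the same Arzel\`a--Ascoli mechanism the paper deploys later in the proof of Theorem \ref{thm-2-2} --- and it never actually uses the hypothesis that the limit curve $\alpha$ has bounded upper and lower curvatures (only that $\alpha$ is $C^1$ regular, to guarantee $L^*>0$), so it proves a slightly stronger statement. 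Your handling of the delicate points is sound: the reverse-Cauchy--Schwarz chord bound with $\eta=1/M$, the geodesic extension preserving the $M$-Lipschitz bound on tangents so that all curves live on a common interval, and the identification $\alpha(t)=\hat\alpha_*(L^*t)$ via uniform convergence plus the $1$-Lipschitz property of $\hat\alpha_*$.
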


\begin{proof} By the equivalence of the metrics $d^0$ and $\bar{d}^0$, $\{\alpha_k\}_{k\in\mathbb{N}}$ converges in $\alpha$ in $\bar{d}^0$ metric. By contradiction, suppose that  $\displaystyle\lim_{k\to\infty}L_{\alpha_k}\neq L_\alpha$. By taking a subsequence, one of cases below happens:
\begin{enumerate}
\item $\displaystyle\lim_{k\to\infty}L_{\alpha_k}=A\neq L_\alpha $, where $A$ is a finite number.
\item $\displaystyle\lim_{k\to\infty}L_{\alpha_k}=\infty$.
\end{enumerate}
 By Theorem \ref{thm1-1},  for $\alpha$ and each $\alpha_k$,  its  tangent vector   is Lipschitz continuous on the compact set $[0,1]$ and hence absolutely continuous.  By Taylor formula with integral remainder,
\begin{align*}
\alpha_k(t)-\alpha_k(0) & =\dot{\alpha}_k(0)t+\int_0^t\ddot{\alpha}_k(\tau)(t-\tau)d\tau
\end{align*}
\begin{align*}
\alpha(t)-\alpha(0)  &=\dot{\alpha}(0)t+\int_0^t\ddot{\alpha}(\tau)(t-\tau)d\tau.
\end{align*}
Note that  the parameter $t\in [0,1]$ satisfies  $s=L_{\alpha_k}t$ for each $k$ and $s=L_{\alpha}t$, where $s$ denotes the corresponding arc-length parameter for $\alpha_k$ and $\alpha$ respectively. We have

\begin{align}\label{equation-difference}
\alpha_k(t)-\alpha(t) - & (\alpha_k(0)-\alpha(0)) \nonumber  \\
& =(\dot{\alpha}_k(0)-\dot{\alpha}(0))t+\int_0^t[\ddot{\alpha}_k(\tau)-\ddot{\alpha}(\tau)](t-\tau)d\tau\nonumber\\
 & =[L_{\alpha_k}\vet{t}_{\alpha_k}(0)-L_{\alpha}\vet{t}_{\alpha}(0)]t+ \int_0^t [L_{\alpha_k}^2\vet{t}'_{\alpha_k}(\tau)-L_{\alpha}^2\vet{t}'_{\alpha}(\tau)](t-\tau)d\tau,
\end{align}
where $\vet{t}'_{\alpha_k}=-\alpha_k +\kappa_{\alpha_k}\vet{n}_{\alpha_k}$, $\vet{t}'_{\alpha}=-\alpha +\kappa_{\alpha}\vet{n}_{\alpha}$ for a.e. $s$, 
$\kappa_{\alpha_k}$ is in $(\kappa_1,\kappa_2) $, and $\kappa_{\alpha}$ is bounded.
So $\vet{t}'_{\alpha_k}$ and $\vet{t}'_{\alpha}$ are uniformly bounded for all $k$ and $t$.
 
 \textbf{Case 1}:
Since $\displaystyle\lim_{k\to\infty}L_{\alpha_k}=A<\infty$, there exists a positive constant $c$ such that 
$$|L_{\alpha_k}^2\vet{t}'_{\alpha_k}(\tau)-L_{\alpha}^2\vet{t}'_{\alpha}(\tau)|\leq c.$$ Hence
\begin{align*}
|\alpha_k(t)-\alpha(t)| 
 & \geq |L_{\alpha_k}\vet{t}_{\alpha_k}(0)-L_{\alpha}\vet{t}_{\alpha}(0)|t-c\int_0^t(t-\tau)d\tau-|\alpha_k(0)-\alpha(0)| \\
 &= |L_{\alpha_k}-L_{\alpha}|t-\frac c2t^2-|\alpha_k(0)-\alpha(0)|.
\end{align*}
Since $A\neq L_{\alpha}$, there is a very small $\epsilon>0$ satisfying $\frac{\epsilon}{c}< 1$ and $k_0\geq 0$ such that for  $k\geq k_0$, $|L_k-L_{\alpha}|\geq \epsilon>0$. We may  choose $k_0$ such that for $k\geq k_0$, $|\alpha_k(0)-\alpha(0)|\leq  \frac{\epsilon^2}{4c}$ also holds. Then, for $0\leq t\leq \frac{\epsilon}{c}$,
\begin{align*}
\left|\alpha_k(t)-\alpha(t)\right| 
 &\geq t\left(\epsilon-\frac{c}{2}t\right)- \frac{\epsilon^2}{2c}\geq \frac{\epsilon}{2}t -\frac{\epsilon^2}{4c}.
\end{align*}
Taking $t=\frac{\epsilon}{c}$, we have that 
\begin{align*}\left|\alpha_k\left(\frac{\epsilon}{c}\right)-\alpha\left(\frac{\epsilon}{c}\right)\right| 
 &\geq  \frac{\epsilon^2}{4c}>0.
 \end{align*}
 So
 \begin{align*}
\max_{t\in[0,1]}|\alpha_k(t)-\alpha(t)|  &\geq   \frac{\epsilon^2}{4c}>0.
\end{align*}
This implies that $\displaystyle\lim_{k\rightarrow \infty}\bar{d}^0(\alpha_k,\alpha)=\lim_{k\rightarrow \infty}\max_{t\in[0,1]}|\alpha_k(t)-\alpha(t)|\neq 0 $ which is a contradiction.

\textbf{Case 2}: 
Similar to Case 1,  there exists  positive constants $c_1$ and $c_2$ such that 
\begin{align*}
|\alpha_k(t)-\alpha(t)| 
 & \geq |L_{\alpha_k}-L_{\alpha}|t-c_1L_{\alpha_k}^2\int_0^t(t-\tau)d\tau-c_2L_{\alpha}^2\int_0^t(t-\tau)d\tau-|\alpha_k(0)-\alpha(0)|\\
 &= |L_{\alpha_k}-L_{\alpha}|t-\frac {c_1L_{\alpha_k}^2+c_2L_{\alpha}^2}2t^2-|\alpha_k(0)-\alpha(0)|.
\end{align*}
Since $L_{\alpha_k}\rightarrow\infty$, there exists a number  $k_0>0$ such that $L_{\alpha_k}\geq \max\{1, 2L_{\alpha}\}$ for $k\geq k_0$. So for $k\geq k_0$,
\begin{align*}
|\alpha_k(t)-\alpha(t)|  &\geq  \frac1{2}\left(L_{\alpha_k}t-cL_{\alpha_k}^2t^2\right)-|\alpha_k(0)-\alpha(0)|,
\end{align*}
where $c=c_1+\frac{c_2}{4}.$
Again choose a number $k_1\geq k_0$ such that for $k\geq k_1$,  $2cL_{\alpha_k}\leq 1$ and $|\alpha_k(0)-\alpha(0)|\leq \frac{1}{16c}.$
For each $k\geq k_1$, we take $t_k=\frac{1}{2cL_{\alpha_k}}\in [0,1]$. Then
\begin{align*}
 \left|\alpha_k(t_k)-\alpha(t_k)\right| \geq \frac{1}{2}\left(\frac{1}{2c}-\frac{1}{4c}\right)-\frac{1}{16c}=\frac1{8c}-\frac1{16c}=\frac1{16c}.
\end{align*}
Thus
\begin{align*}
\max_{t\in[0,1]}\left|\alpha_k(t)-\alpha(t)\right| &\geq \frac1{16c}.
\end{align*}

This  induces a contradiction with $\displaystyle\lim_{k\rightarrow \infty}\bar{d}^0(\alpha_k,\alpha)=0$.
\end{proof}

As a corollary, Proposition \ref{propconvlength} implies the following lemma

\begin{lemma}\label{lemconvlength} Let $-\infty<\kappa_1<\kappa_2<+\infty$. Consider a sequence $\{\alpha_k\}_{k\in\mathbb{N}}$ in $\mathcal{L}_{\kappa_1}^{\kappa_2}(\mat{P},\mat{Q})$ converging to $\alpha\in\bar{\mathcal{L}}_{\kappa_1}^{\kappa_2}(\mat{P},\mat{Q})$ in $d^0$ metric. For each $k\in\mathbb{N}$, let $L_{\alpha_k}$ be the length of $\alpha_k$  and $L_\alpha$  the length of $\alpha$ respectively.  Then $\displaystyle\lim_{k\to\infty}L_{\alpha_k}=L_\alpha$.
\end{lemma}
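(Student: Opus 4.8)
The plan is to obtain the Lemma directly from Proposition \ref{propconvlength}, whose substantive analytic content has already been established; the only task remaining is to check that the membership conditions in the statement translate exactly into the hypotheses of that Proposition. First I would observe that every $\alpha_k\in\mathcal{L}_{\kappa_1}^{\kappa_2}(\mat{P},\mat{Q})$ satisfies, by Definition \ref{deflspace}, the pointwise bound $\kappa_1<\kappa_{\alpha_k}^-(t)\leq\kappa_{\alpha_k}^+(t)<\kappa_2$ for all $t\in[0,1]$. This is precisely the curvature hypothesis imposed on the sequence $\{\alpha_k\}_{k\in\mathbb{N}}$ in Proposition \ref{propconvlength}, so no further work is needed for the sequence.

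Next I would verify the hypothesis on the limit curve. Since $\alpha\in\bar{\mathcal{L}}_{\kappa_1}^{\kappa_2}(\mat{P},\mat{Q})$, it is in particular a $C^1$ regular curve in $\mathcal{I}(\mat{P},\mat{Q})$ whose upper and lower curvatures obey the closed constraint $\kappa_1\leq\kappa_\alpha^-(t)\leq\kappa_\alpha^+(t)\leq\kappa_2$ for all $t\in[0,1]$. In particular $\kappa_\alpha^-$ and $\kappa_\alpha^+$ are bounded, so $\alpha$ is exactly a $C^1$ regular curve with bounded upper and lower curvatures, which is the remaining requirement of the Proposition on the limit. Together with the hypothesis that $\{\alpha_k\}$ converges to $\alpha$ in the $d^0$ metric, all of the hypotheses of Proposition \ref{propconvlength} are in force, and applying it yields $\lim_{k\to\infty}L_{\alpha_k}=L_\alpha$, as claimed.

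The main (and rather mild) obstacle is confirming that membership in the closure $\bar{\mathcal{L}}_{\kappa_1}^{\kappa_2}(\mat{P},\mat{Q})$ guarantees that $\alpha$ is genuinely $C^1$ regular with bounded upper and lower curvatures, rather than merely a $C^0$ limit that could fail to be an immersion. This follows from interpreting $\bar{\mathcal{L}}_{\kappa_1}^{\kappa_2}(\mat{P},\mat{Q})$ as the set of $C^1$ regular curves in $\mathcal{I}(\mat{P},\mat{Q})$ satisfying the closed curvature bounds, so that the boundedness of $\kappa_\alpha^\pm$ by $\kappa_1$ and $\kappa_2$ is built into the membership condition. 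With that identification in hand, the Lemma is an immediate corollary, and I would record it as such without reproducing the Case 1 and Case 2 estimates from the proof of the Proposition.
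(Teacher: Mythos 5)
Your proposal is correct and coincides with the paper's own treatment: the paper gives no separate argument for Lemma \ref{lemconvlength}, but simply states it as a corollary of Proposition \ref{propconvlength}, exactly as you do. Your additional care in interpreting $\bar{\mathcal{L}}_{\kappa_1}^{\kappa_2}(\mat{P},\mat{Q})$ (which the paper never formally defines) as the set of $C^1$ regular curves satisfying the closed curvature bounds, so that the limit curve has bounded upper and lower curvatures as the Proposition requires, is a reasonable and faithful reading of the hypothesis.
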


Now we are ready to prove

\begin{theorem} \label{thm-2-2}Let $-\infty <\kappa_1<\kappa_2<+\infty$ and $\mat{P},\mat{Q}\in\SO$, the metric spaces $(\mathcal{P}_{\kappa_1}^{\kappa_2}(\mat{P},\mat{Q}),d^0)$ and $(\mathcal{P}_{\kappa_1}^{\kappa_2}(\mat{P},\mat{Q}),d^1)$ generate the same topology.
\end{theorem}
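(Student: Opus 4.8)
The plan is to prove equivalence of the two metrics by showing that a sequence converges in $d^0$ if and only if it converges in $d^1$; since both metrics live on the same set $\mathcal{P}_{\kappa_1}^{\kappa_2}(\mat{P},\mat{Q})$ and metric spaces are first countable, this sequential criterion suffices. The implication ``$d^1$-convergence $\Rightarrow$ $d^0$-convergence'' is immediate, because the footpoint projection $\textup{T}\mathbb{S}^2\to\mathbb{S}^2$ is distance non-increasing, so $d^0\le d^1$. Hence the entire content is the reverse implication: if $\{\alpha_k\}\subset\mathcal{P}_{\kappa_1}^{\kappa_2}(\mat{P},\mat{Q})$ converges in $d^0$ to $\alpha\in\mathcal{P}_{\kappa_1}^{\kappa_2}(\mat{P},\mat{Q})$, then $\dot\alpha_k\to\dot\alpha$ uniformly as well. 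Since every curve is parameterized with constant speed, $\dot\alpha_k(t)=L_{\alpha_k}\vet{t}_{\alpha_k}(t)$ and $\dot\alpha(t)=L_\alpha\vet{t}_\alpha(t)$; by Lemma \ref{lemconvlength} we have $L_{\alpha_k}\to L_\alpha$, so the problem reduces to proving uniform convergence of the unit tangents, $\vet{t}_{\alpha_k}\to\vet{t}_\alpha$ in the $C^0$ sense.

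Next I would establish equicontinuity of the family $\{\vet{t}_{\alpha_k}\}$. By Theorem \ref{thm1-1}, each $\alpha_k\in\mathcal{P}_{\kappa_1}^{\kappa_2}(\mat{P},\mat{Q})$ has a Lipschitz unit tangent satisfying $\vet{t}'_{\alpha_k}(s)=-\alpha_k(s)+\kappa_{\alpha_k}(s)\vet{n}_{\alpha_k}(s)$ for a.e.\ arc-length $s$, whence $|\vet{t}'_{\alpha_k}(s)|=\sqrt{1+\kappa_{\alpha_k}(s)^2}\le\sqrt{1+C^2}$ a.e., where $C\coloneqq\max\{|\kappa_1|,|\kappa_2|\}$ is independent of $k$. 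Passing from arc-length $s$ to the parameter $t$ through $s=L_{\alpha_k}t$ and using that $\{L_{\alpha_k}\}$ is bounded (again Lemma \ref{lemconvlength}), the maps $t\mapsto\vet{t}_{\alpha_k}(t)$ are uniformly Lipschitz, hence equicontinuous and uniformly bounded. The Arzelà--Ascoli theorem then makes the sequence $\{\vet{t}_{\alpha_k}\}$ precompact in the $C^0$ topology.

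It then remains to identify the subsequential limits. Suppose $\vet{t}_{\alpha_{k_j}}\to\vet{T}$ uniformly along some subsequence. Then $\alpha_{k_j}\to\alpha$ uniformly while $\dot\alpha_{k_j}=L_{\alpha_{k_j}}\vet{t}_{\alpha_{k_j}}\to L_\alpha\vet{T}$ uniformly; by the standard theorem on differentiating uniformly convergent sequences, $\alpha$ is differentiable with $\dot\alpha=L_\alpha\vet{T}$, and since $\alpha$ has constant speed $L_\alpha$ this forces $\vet{T}=\vet{t}_\alpha$. Thus every uniformly convergent subsequence has the same limit $\vet{t}_\alpha$, and combined with precompactness this yields $\vet{t}_{\alpha_k}\to\vet{t}_\alpha$ uniformly for the full sequence. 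Together with $L_{\alpha_k}\to L_\alpha$ we obtain $\dot\alpha_k\to\dot\alpha$ uniformly, i.e.\ $d^1(\alpha_k,\alpha)\to 0$, which completes the reverse implication and hence the proof.

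The main obstacle is the control of the lengths: both the uniform Lipschitz bound on the tangents as functions of $t$ and the final identification $\dot\alpha_k\to\dot\alpha$ rely on $L_{\alpha_k}\to L_\alpha<\infty$, which is exactly the content of Lemma \ref{lemconvlength} (built on Proposition \ref{propconvlength} and ultimately on the sandwich estimate of Lemma \ref{sandwich}); without it the reparameterization in $t$ could degenerate and equicontinuity would fail. A secondary delicate point is that a $d^0$-limit is a priori only a uniform limit of the curves, not of their derivatives; this gap is closed by the differentiation-under-uniform-limits theorem, applicable precisely because Theorem \ref{thm1-1} guarantees that the limit $\alpha$ again lies in $\mathcal{P}_{\kappa_1}^{\kappa_2}(\mat{P},\mat{Q})$ and is therefore constant-speed with a well-defined unit tangent $\vet{t}_\alpha$.
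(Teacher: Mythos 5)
Your proposal is correct and takes essentially the same route as the paper's proof: the trivial direction from $d^0\le d^1$, and for the converse a uniform Lipschitz bound on the velocities coming from the curvature constraint together with the length control of Lemma \ref{lemconvlength}, then Arzel\`a--Ascoli, then identification of the subsequential limit of the derivatives with $\dot\alpha$. The only cosmetic differences are that the paper frames the converse as a proof by contradiction and identifies the limit by integrating ($\int_{t_1}^{t_2} v = \lim_j \bigl(\alpha_{k_j}(t_2)-\alpha_{k_j}(t_1)\bigr) = \int_{t_1}^{t_2}\dot\alpha_0$), which is exactly the proof of the differentiation-under-uniform-limits theorem you invoke.
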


\begin{proof}  For a curve $\alpha\in\mathcal{P}_{\kappa_1}^{\kappa_2}(\mat{P},\mat{Q})$, we choose here the parameter $t$ so that it is proportional to arc-length and $\alpha:[0,1]\to\mathbb{S}^2$ has constant speed $|\dot{\alpha}|\equiv L_{\alpha}$. Since the metrics $d^1$ and $\bar{d}^1$ are equivalent, it is enough to prove that the topologies induced by the metrics $\bar{d}^1$ and $d^0$ are the same. Since $\bar{d}^1(\alpha,\beta) \geq d^0(\alpha,\beta)$ for any $\alpha,\beta\in\mathcal{P}_{\kappa_1}^{\kappa_2}(\mat{P},\mat{Q})$, the topology induced by $\bar{d}^1$ is finer than the topology induced by $d^0$. So it suffices to prove the reciprocal.  

Given a sequence $\{\alpha_k\}_{k\in\mathbb{N}}$ which is convergent in $d^0$ to $\alpha_0$ we shall prove that it is also convergent in $\bar{d}^1$.

Suppose, by contrary, that $\bar{d}^1(\alpha_k,\alpha_0)\nrightarrow 0$ as $k\rightarrow \infty$.  Then there exists  some $\epsilon>0$, by taking a subsequence, still denoted by $\alpha_k$,  it holds that
$$\bar{d}^1(\alpha_k,\alpha_0)\geq \epsilon > 0.$$
Since $\displaystyle d^0(\alpha_k,\alpha_0) \to 0$, there exist a $k_0>0$ such that,  for $k\geq k_0$,
\begin{equation*}
\max_{t\in [0,1]}\{d(\dot{\alpha}_k(t),\dot{\alpha}_0(t))\}\geq \frac{3\epsilon}{4}.
\end{equation*}
 By Lemma \ref{lemconvlength}, $|\dot{\alpha}_{k}(t)|= L_{\alpha_k}\leq M$.
 
For every curve $\alpha_k\in \mathcal{P}_{\kappa_1}^{\kappa_2}(\mat{P},\mat{Q})$, $\dot{\alpha}_k$ is Lipschitz continuous and so 
 \begin{align}\label{eqlip5}
\dot{\alpha}_k(t_2)-\dot{\alpha}_k(t_1)& =\int_{t_1}^{t_2}\ddot{\alpha}_k(\tau)d\tau=L^2_{\alpha_k}\int_{t_1}^{t_2}\vet{t}'_{\alpha_k}(\tau)d\tau.
\end{align}
where $\dot{\alpha}_k(t)=L_{\alpha_k}\vet{t}_{\alpha_k}$, $\ddot{\alpha}_k(t)=L^2_{\alpha_k}\vet{t}_{\alpha_k}'$, for a.e. $t$. By Lemma \ref{lemconvlength} and \eqref{eqlip5}, the family $\{\dot{\alpha}_k\}_{k\in\mathbb{N}}$ has uniform bound and has the uniform Lipschitz constant.  This implies that the family $\{\dot{\alpha}_k\}_{k\in\mathbb{N}}$ is  equicontinuous.   By Azel\'a-Ascoli theorem, there exists a subsequence $\{\dot{\alpha}_{k_j}\}_{k\in\mathbb{N}}$ converges uniformly to a limit $v(t)$  which is a vector function defined at $[0,1]$. 
 
\noindent\textbf{Claim:}  $\dot{\alpha}_0=v(t)$.
 
 Since $\dot{\alpha}_0$ and $v(t)$ are continuous, it is sufficient to prove that for any $0\leq t_1\leq t_2\leq 1$, $\int_{t_1}^{t_2}(\dot{\alpha}_0(t)-v(t))dt=0$.
\begin{align}\label{tangentvector}
\int_{t_1}^{t_2}v(t)dt&=\int_{t_1}^{t_2}\lim_{j\rightarrow \infty}\dot{\alpha}_{k_j}(t)dt\nonumber\\
&=\lim_{j\rightarrow \infty}\int_{t_1}^{t_2}\dot{\alpha}_{k_j}(t)dt\\
&=\lim_{j\rightarrow \infty}\left(\alpha_{k_j}(t_2)-\alpha_{k_j}(t_1)\right)\nonumber\\
&=\alpha_0(t_2)-\alpha_0(t_1)\nonumber\\
&=\int_{t_1}^{t_2}\dot{\alpha}_0(t)dt.\nonumber
\end{align}
In the second equality of (\ref{tangentvector}), we used the uniform convergence of $\{\dot{\alpha}_{k_j}\}_{j\in\mathbb{N}}$ and the dominated convergence theorem. This proves the claim.
 
 By Claim, we have, for sufficiently large $j$,
 \begin{equation*}
\max_{t\in [0,1]}\left\{d(\dot{\alpha}_{k_j}(t),\dot{\alpha}_0(t))\right\}<\frac{3\epsilon}{4}.
\end{equation*}
Thus the contradiction happens. We have proved $\bar{d}^1(\alpha_k,\alpha_0)\rightarrow 0$ as $k\rightarrow\infty$.
\end{proof}

\section{Banach manifold  $\mathcal{P}_{\kappa_1}^{\kappa_2}(\mat{P},\mat{Q})$}\label{banach}

We observe that the space  $\mathcal{P}_{\kappa_1}^{\kappa_2}(\mat{P},\mat{Q})$  with $C^0$ (equivalently $C^1$) topology in Section \ref{C-topology} is not complete. In this section, we furnish $\mathcal{P}_{\kappa_1}^{\kappa_2}(\mat{P},\mat{Q})$ a complete norm such that it is a Banach manifold. In \cite{salzuh}, the  Saldanha and Zühlke  constructed a Hilbert manifold structure on a special subspace of the space of  so-called  $(\kappa_1, \kappa_2)$-admissible curves. We will use an approach similar to theirs.

Denote the space
 $\mathbf{E}_{\infty}=L^\infty[0,1]\times L^\infty[0,1]$, where $L^\infty[0,1]$ denotes the Banach space of essentially  bounded measurable functions on $[0,1]$ together with the essential supremum norm. Let $W^{1,\infty}[0,1]$ denote the Sobolev space of the functions in $L^\infty[0,1]$ with their weak derivatives also in $L^\infty[0,1]$. It is well known that a function in $W^{1,\infty}[0,1]$ is Lipschitz continuous.



If $\gamma(t): [0,1]\rightarrow \mathbb{S}^2$ is a smooth regular parameterized curve, its Frenet frame $\mathfrak{F}(t)$ satisfies 
$$\mathfrak{F}'_{\gamma}(t)=\mathfrak{F}_{\gamma}(t)\Lambda(t),$$
where
\begin{equation*}
\Lambda(t)=\left(\begin{array}{ccc} 0 & -|\dot{\gamma}(t)| & 0 \\
|\dot{\gamma}(t)| & 0 & -|\dot{\gamma}(t)|\kappa(t) \\
0 & |\dot{\gamma}(t)|\kappa(t) & 0 
\end{array}\right)\in \mathfrak{so}_3(\mathbb{R}). 
\end{equation*}

Now, given a  $\mat{P}\in\SO$ and a map $\Lambda:[0,1]\to \mathfrak{so}_3(\mathbb{R})$ of the form:
\begin{equation*}
\Lambda(t)=\left(\begin{array}{ccc} 0 & -v(t) & 0 \\
v(t) & 0 & -w(t) \\
0 & w(t) & 0 
\end{array}\right),
\end{equation*}

\noindent where $v,w\in L^{\infty}[0,1]$ and $v(t) >0 $. By the ODE theory (see \cite{or}, Theorem 3.4),  the initial value problem:
\begin{equation}\label{ivp}
\dot{\Phi}(t) = \Phi(t)\Lambda(t),\quad \Phi(0)= \mat{P},
\end{equation}
exists the unique solution $\Phi: [0,1]\to \SO$ which is of $W^{1,\infty}[0,1]$.
Let $h:(0,+\infty)\to\real $ be the smooth diffeomorphism
$$ h(t) = t - t^{-1} .$$
For each real number pair $\kappa_1<\kappa_2$, let $h_{\kappa_1,\kappa_2}:(\kappa_1,\kappa_2)\to\real$ be the smooth diffeomorphism
$$h_{\kappa_1,\kappa_2}(t) = (\kappa_1-t)^{-1} + (\kappa_2 - t)^{-1}. $$

If we take a pair  $(\hat{v},\hat{w})\in\mathbf{E}_{\infty}$, and 
$(v,w)$ given by
\begin{equation}\label{vw}
v(t)=h^{-1}(\hat{v}(t)),\quad w(t)=v(t)h^{-1}_{\kappa_1,\kappa_2}(\hat{w}(t)).
\end{equation}
By the definition of $h$ and $h_{\kappa_1,\kappa_2}$, it is straightforward to verify that $(v,w)\in \bf{E}_\infty$, $v(t)>0$ and $w(t)\in (\kappa_1, \kappa_2)$, $t\in [0,1]$. Hence we have the following definition:
\begin{definition} A parameterized curve $\gamma:[0,1]\to\mathbb{S}^2$ is called \emph{$(\kappa_1,\kappa_2)$-strongly admissible} if there exist $\mat{P}\in\SO$ and a pair $(\hat{v},\hat{w})\in\mathbf{E}_{\infty}$ such that $\gamma(t)=\Phi(t)(1,0,0)$ for all $t\in [0,1]$, where $\Phi$ is the unique solution in $W^{1,\infty}[0,1]$ to the initial value problem \eqref{ivp}, with $v,w$ given by Equation (\ref{vw}).


 Let $\mathcal{R}_{\kappa_1}^{\kappa_2}(\mat{P},\cdot)$ denote the set of all $(\kappa_1,\kappa_2)$-strongly admissible  parameterized curves $\gamma$ such that $\Phi_\gamma(0) = \mat{P}$.
 
We also define $\mathcal{R}_{\kappa_1}^{\kappa_2}(\mat{P},\mat{Q})$ to be the subspace of $\mathcal{R}_{\kappa_1}^{\kappa_2}(\mat{P},\cdot)$ satisfying
$$\Phi_\gamma(0) = \mat{P} \quad \text{and} \quad \Phi_\gamma(1)=\mat{Q},$$
where $\Phi_\gamma$ is the Frenet frame of $\gamma$. 

The set $\mathcal{R}_{\kappa_1}^{\kappa_2}(\mat{P},\cdot)$ is identified with $\mathbf{E}_\infty$ via correspondence $\gamma\leftrightarrow (\hat{v},\hat{w})$. 

\end{definition}

The above  identification   induces a norm $\|\cdot\|_B$ in $\mathcal{R}_{\kappa_1}^{\kappa_2}(\mat{P},\cdot)$ such that it becomes a Banach space and hence a (trivial) Banach manifold.
Moreover, since $\SO$ has dimension $3$, $\mathcal{R}_{\kappa_1}^{\kappa_2}(\mat{P},\mat{Q})$ is a closed subspace of codimension $3$ in $\mathcal{R}_{\kappa_1}^{\kappa_2}(\mat{P},\cdot)$ and a Banach space. 

Now we prove that


\begin{proposition}\label{prop13} Let $-\infty<\kappa_1<\kappa_2< +\infty$ and $\mat{P},\mat{Q}\in\SO$. Any parameterized curve $\gamma \in \mathcal{R}_{\kappa_1}^{\kappa_2}(\mP,\mQ)$ can be reparameterized by arc-length $s$, such that it becomes a $C^1$ regular parameterized curve and $[\gamma(s)]\in \mathcal{P}_{\kappa_1}^{\kappa_2}(\mat{P},\mat{Q})$.
\end{proposition}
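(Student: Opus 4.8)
The plan is to exhibit the arc-length reparameterization explicitly and read off every required property directly from the defining equation \eqref{ivp}. Writing the columns of the solution as $\Phi(t)=\big(\gamma(t),\vet{t}(t),\vet{n}(t)\big)$, which form an orthonormal frame since $\Phi(t)\in\SO$, I would first differentiate $\gamma(t)=\Phi(t)(1,0,0)$ by means of \eqref{ivp}, obtaining $\dot\gamma(t)=\Phi(t)\Lambda(t)(1,0,0)=v(t)\,\vet{t}(t)$ for a.e.\ $t$, because $\Lambda(t)(1,0,0)=v(t)(0,1,0)$ and $\Phi(t)(0,1,0)=\vet{t}(t)$. The crucial observation is that, since $v=h^{-1}(\hat v)$ with $\hat v\in L^{\infty}[0,1]$ and $h^{-1}\colon\real\to(0,+\infty)$ is continuous and increasing, the speed $v$ is essentially bounded away from both $0$ and $+\infty$, say $0<m\le v(t)\le M$ for a.e.\ $t$.

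Next I would introduce the arc-length $s(t)=\int_0^t v(\tau)\,d\tau$. Being Lipschitz, strictly increasing and satisfying $m\,(t_2-t_1)\le s(t_2)-s(t_1)\le M\,(t_2-t_1)$ for $t_1<t_2$, it is a bi-Lipschitz homeomorphism of $[0,1]$ onto $[0,L_\gamma]$ whose inverse $t(s)$ is Lipschitz with $t'(s)=1/v(t(s))$ a.e. Setting $\tilde\gamma(s)\coloneqq\gamma(t(s))$, the chain rule (valid a.e.\ because a bi-Lipschitz change of variable maps null sets to null sets) gives $\tilde\gamma'(s)=v(t(s))\,\vet{t}(t(s))\cdot t'(s)=\vet{t}(t(s))$ for a.e.\ $s$. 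The key point is that all the discontinuities of $\dot\gamma$ reside in the scalar factor $v$; dividing by the speed cancels $v$ and leaves the \emph{continuous} direction field $\vet{t}(t(s))$. Since an absolutely continuous function whose a.e.\ derivative agrees with a continuous function is $C^1$, it follows that $\tilde\gamma$ is $C^1$, and $|\tilde\gamma'|\equiv 1$ shows it is regular and parameterized by arc-length.

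It then remains to identify the frame and the curvature of $\tilde\gamma$. Because $\Phi(t)\in\SO$ preserves the cross product, the Frenet frame of $\tilde\gamma$ at $s$ is exactly $\big(\gamma(t(s)),\vet{t}(t(s)),\vet{n}(t(s))\big)=\Phi(t(s))$; evaluating at $s=0$ and $s=L_\gamma$ (where $t=0,1$) yields $\mathfrak{F}_{\tilde\gamma}(0)=\Phi(0)=\mP$ and $\mathfrak{F}_{\tilde\gamma}(L_\gamma)=\Phi(1)=\mQ$. Differentiating $\vet{t}(t)=\Phi(t)(0,1,0)$ via \eqref{ivp} gives $\dot{\vet{t}}(t)=-v(t)\gamma(t)+w(t)\vet{n}(t)$, whence, for a.e.\ $s$,
\begin{equation*}
\tilde\gamma''(s)=\dot{\vet{t}}(t(s))\,t'(s)=-\tilde\gamma(s)+\frac{w(t(s))}{v(t(s))}\,\vet{n}_{\tilde\gamma}(s).
\end{equation*}
Thus the geodesic curvature of $\tilde\gamma$ is $\kappa_{\tilde\gamma}(s)=w(t(s))/v(t(s))=h_{\kappa_1,\kappa_2}^{-1}(\hat w(t(s)))$. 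Since $\hat w\in L^{\infty}[0,1]$ is essentially bounded by some $K$ and $h_{\kappa_1,\kappa_2}^{-1}\colon\real\to(\kappa_1,\kappa_2)$ is increasing, for a.e.\ $s$ we get $\kappa_1<h_{\kappa_1,\kappa_2}^{-1}(-K)\le\kappa_{\tilde\gamma}(s)\le h_{\kappa_1,\kappa_2}^{-1}(K)<\kappa_2$ (the bi-Lipschitz $t(s)$ again preserving the essential bounds), so $\kappa_1<\essinf_s\kappa_{\tilde\gamma}\le\esssup_s\kappa_{\tilde\gamma}<\kappa_2$. The same displayed formula shows $\tilde\gamma''\in L^{\infty}$, i.e.\ $\tilde\gamma'$ is Lipschitz; hence $[\tilde\gamma]\in\mathcal{P}_{\kappa_1}^{\kappa_2}(\mP,\mQ)$.

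The main obstacle, and the only genuinely delicate point, is the $C^1$ regularity of the reparameterized curve: although $\gamma(t)$ is merely Lipschitz in $t$ (its velocity $\dot\gamma=v\,\vet{t}$ need not be continuous, since $v=h^{-1}(\hat v)$ can jump), the passage to arc-length must be justified carefully. Two facts make it work: that $v$ is bounded away from $0$ (so $s(t)$ is bi-Lipschitz and the chain rule applies a.e.), and that the direction $\vet{t}(t)=\Phi(t)(0,1,0)$ is continuous because $\Phi$ is; together they force $\tilde\gamma'=\vet{t}\circ t$ to be continuous and therefore $\tilde\gamma$ to be $C^1$. Everything else is a direct computation from \eqref{ivp}.
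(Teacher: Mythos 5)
Your proposal is correct and follows essentially the same route as the paper's own proof: arc-length reparameterization via the bi-Lipschitz $s(t)=\int_0^t v$, continuity of the unit tangent as a column of the $W^{1,\infty}$ frame $\Phi$ to upgrade the a.e.\ derivative to genuine $C^1$ regularity, and the curvature identification $\kappa=w/v=h^{-1}_{\kappa_1,\kappa_2}(\hat w\circ t)$ to obtain the essential bounds. Your treatment is in places slightly more careful than the paper's (explicit bi-Lipschitz justification of the chain rule, explicit endpoint frames, explicit bounds $h^{-1}_{\kappa_1,\kappa_2}(\pm K)$), but the underlying argument is the same.
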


\begin{proof} 
The proof is divided by several steps. 
Let $\gamma: [0,1]\to\mathbb{S}^2$ be a curve in $\mathcal{R}_{\kappa_1}^{\kappa_2}(\mat{P},\mat{Q})$.
 
(i). we prove the existence of the arc-length $s$ of $\gamma$ and show some of properties of $\gamma(s)$.  

By the definition of $\mathcal{R}_{\kappa_1}^{\kappa_2}(\mat{P},\mat{Q})$,   the frame $\Phi_\gamma$ of $\gamma$ is of $W^{1,\infty}[0,1]$. As a component of $\Phi_\gamma$, the function $\gamma$ is Lipschitz continuous. This implies that $\dot{\gamma}(t)$ exists a.e. $t$ and $|\dot{\gamma}(t)|$ is bounded.  Further  the arc-length of $\gamma$, as a curve, is well defined and is equal to
$$s(t)=\int_0^t|\dot{\gamma}(t)|dt,\quad t\in [0,1].$$ 
where $s(t)$ is Lipschitz continuous.
By the equation $\dot{\gamma}(t)=v(t)\vet{t}(t)$ a.e.,
$$s(t)=\int_0^t|\dot{\gamma}(t)|dt=\int_a^tv(t)dt,\quad t\in [0,1].$$

Note that  $v(t)>0$ in $[0,1] $ for a.e. $t\in[0,1]$. $s(t)$ is strictly increasing in $[0,1]$ and exists a strictly increasing continuous inverse function $t(s), s\in [0, L_{\gamma}]$. Hence, $t'(s)$ exists for a.e. $s\in [0, L_{\gamma}]$, where $t'(s)=\frac{1}{v(t(s))}$. In addition, observe that $v(t)=h^{-1}(\hat{v}(t))$ and $\hat{v}(t)\in L^\infty[0,1]$. Then $v(t)=\frac{\hat{v}+\sqrt{\hat{v}^2+4}}{2}$ is bounded below by a positive number for a.e. $t\in [0,1]$. This implies that  $v(t(s))$ is bounded below by a positive number for a.e. $s\in [0, L_{\gamma}]$. in $L^\infty[0,L_\gamma]$ and hence $t(s)$ is Lipschitz continuous. 

 Now we reparameterize $\gamma$ by arc-length $s$, that is, $\gamma(s)=\gamma(t(s)), s\in [0, L_{\gamma}]$. 

It is known  that Lipschitz continuity of $\gamma(t)$ implies Lipschitz continuity of $\gamma(s)$ (see \cite{auscher} Theorem 3.2), that is $\gamma(s)\in W^{1,\infty}[0,L_{\gamma}]$. Moreover, $\gamma'(s)$ exists for a.e. $s$ and $|\gamma'(s)|=1$ for a.e. $s$ (see \cite{auscher} Corollary 3.7). From the facts on $\gamma(t)$ and $\gamma(s)$ we have obtained above, it holds that
$$\gamma'(s)=\dot{\gamma}(t(s))\cdot t'(s)=\frac{\dot{\gamma}}{v}(t(s)) \quad \text{for a.e. $s$}.$$

By the differential system \eqref{ivp},  we have that, for a.e. $s$, 
$$\gamma'(s)=\frac{\dot{\gamma}}{v}(t(s))=\vet{t}(s),$$  
where $\displaystyle \vet{t}(s)=\vet{t}(t(s))).$

(ii). We will confirm that $\gamma'(s)$ exists for all $s$, $\gamma'(s)=\vet{t}(s)$ and $\gamma(s)$ is a $C^1$ regular curve.

In fact,  for any $s$, since $\gamma(s)$ is Lipschitz continuous and hence absolutely continuous, 
\begin{align}\label{derivativearc}
\gamma'(s)&=\lim_{\Delta s\rightarrow 0}\frac{\gamma(s+\Delta s)-\gamma(s)}{\Delta s}
=\lim_{\Delta s\rightarrow 0}\frac{\int_s^{s+\Delta s}\gamma'(u)du}{\Delta s}\nonumber\\
&=\lim_{\Delta s\rightarrow 0}\frac{\int_s^{s+\Delta s}{\vet{t}(u)}du}{\Delta s}.
\end{align} 
Note that $\vet{t}(t)$, as a component of  $\Phi_\gamma$, is in $W^{1, \infty}[0,1]$, that is, $\vet{t}(t)$ is Lipschitz continuous. So $\vet{t}(s)=\vet{t}(t(s))$ is continuous for $s$. By the mean value theorem, \eqref{derivativearc} implies that
\begin{align}\label{derivativearclength}
\gamma'(s)&
={\vet{t}(s)}, \quad s\in [0,1].
\end{align} 
 So we have proved that  $\gamma(s)$ is a $C^1$ curve. We mention that one may reparameterize $\gamma$ with a new parameter, still denoted by $t$,  such that $\gamma(t)$, $t\in [0,1]$ has the constant speed $L_{\gamma}$ and  is in $C^1[0,1]$.
 
(iii). We will confirm $[\gamma(s)]\in\mathcal{P}_{\kappa_1}^{\kappa_2}(\mat{P},\mat{Q})$.  

Since the parameter $t(s)$ is Lipschitz continuous, $\vet{t}(s)=\vet{t}(t(s))$ is  Lipschitz continuous, that is, it is of $W^{1,\infty}[0,L_{\gamma}]$. By \eqref{ivp}, the following equations hold for a.e. $s$,
\begin{align}\label{derivativetangent}\vet{t}'(s)=\left(-\gamma +\frac{w}{v}\vet{n}\right)(s),
\end{align}
where $ \displaystyle  \vet{n}(s)=\vet{n}(t(s))$ and $\frac{w(t)}{v(t)}\in (\kappa_1,\kappa_2).$

Note that $\hat{w}(t)\in L^{\infty}[0,1]$ and $w(t)=v(t)h^{-1}_{\kappa_1,\kappa_2}(\hat{w}(t))$.  It can be implied by the definition of $h_{\kappa_1, \kappa_2}$ that $$\kappa_1<\essinf_{t\in[0,1]}\frac{w(t)}{v(t)}\leq \esssup_{t\in[0,1]}\frac{w(t)}{v(t)}<\kappa_2.$$
Taking $\gamma(s)=\gamma(t(s))$, we have proved $[\gamma(s)]\in\mathcal{P}_{\kappa_1}^{\kappa_2}(\mat{P},\mat{Q})$. 

\end{proof}


For convenience, we give the following notation
\begin{definition} 
Let $\mathfrak{L}_{\kappa_1}^{\kappa_2}(\mat{P},\mQ)$ be the subset of $\mathcal{R}_{\kappa_1}^{\kappa_2}(\mat{P},\mQ)$ satisfying $\hat{v}(t)\equiv \hat{v}\in\mathbb{R}$.
\end{definition}
Now we are ready to prove Theorem \ref{thm3}, that is ,
 \begin{theorem}\label{thm-banach} $\mathcal{P}_{\kappa_1}^{\kappa_2}(\mat{P},\mat{Q})$ can be furnished a complete norm so that it is  a Banach space, hence a trivial Banach manifold.
\end{theorem}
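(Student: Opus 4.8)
The plan is to identify $\mathcal{P}_{\kappa_1}^{\kappa_2}(\mat{P},\mat{Q})$ with the ``constant-speed'' slice $\mathfrak{L}_{\kappa_1}^{\kappa_2}(\mat{P},\mat{Q})$ of the Banach space $\mathcal{R}_{\kappa_1}^{\kappa_2}(\mat{P},\mat{Q})$ and to transport the norm $\|\cdot\|_B$ across this identification. The point is that $\mathcal{R}_{\kappa_1}^{\kappa_2}(\mat{P},\mat{Q})$ records a curve \emph{together with} a choice of speed, so it is far from being in bijection with $\mathcal{P}_{\kappa_1}^{\kappa_2}(\mat{P},\mat{Q})$, whose elements are classes modulo reparameterization. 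Forcing $\hat v$ to be constant makes $v(t)=h^{-1}(\hat v)$ constant, i.e. selects the constant-speed representative of each class, which is exactly the canonical representative used throughout the paper; this is what makes the correspondence one-to-one.

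Concretely, I would define $\Theta\colon \mathfrak{L}_{\kappa_1}^{\kappa_2}(\mat{P},\mat{Q})\to\mathcal{P}_{\kappa_1}^{\kappa_2}(\mat{P},\mat{Q})$ by sending a strongly admissible $\gamma$ to the class $[\gamma(s)]$ of its arc-length reparameterization; this is well defined by Proposition \ref{prop13}. For injectivity, if $\gamma_1,\gamma_2\in\mathfrak{L}_{\kappa_1}^{\kappa_2}(\mat{P},\mat{Q})$ satisfy $[\gamma_1]=[\gamma_2]$, then $\gamma_1=\gamma_2\circ\varphi$ for an orientation-preserving reparameterization $\varphi$; since both curves have constant speed and the length is an invariant of the class, the two speeds coincide, forcing $|\dot\varphi|\equiv 1$, hence $\varphi=\mathrm{id}$, $\gamma_1=\gamma_2$, and the same pair $(\hat v,\hat w)$. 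For surjectivity, given a class in $\mathcal{P}_{\kappa_1}^{\kappa_2}(\mat{P},\mat{Q})$ I take its constant-speed representative $\alpha\colon[0,1]\to\mathbb{S}^2$, whose Frenet frame solves $\mathfrak{F}'_\alpha=\mathfrak{F}_\alpha\Lambda$ with $v\equiv L_\alpha$ and $w=L_\alpha\kappa$; then $\hat v=h(L_\alpha)$ and $\hat w=h_{\kappa_1,\kappa_2}\circ\kappa$ produce a preimage. The only nontrivial point is that $\hat w\in L^\infty[0,1]$: this is where Theorem \ref{thm1-1} is used, since it guarantees $\kappa_1<\essinf\kappa\le\esssup\kappa<\kappa_2$, so $\kappa$ takes values in a compact subinterval of $(\kappa_1,\kappa_2)$ on which $h_{\kappa_1,\kappa_2}$ is bounded.

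Having the bijection, I would give $\mathcal{P}_{\kappa_1}^{\kappa_2}(\mat{P},\mat{Q})$ the norm pulled back from $\mathfrak{L}_{\kappa_1}^{\kappa_2}(\mat{P},\mat{Q})$. Here $\mathfrak{L}_{\kappa_1}^{\kappa_2}(\mat{P},\cdot)$ is the closed linear subspace $\mathbb{R}\times L^\infty[0,1]$ of $\mathbf{E}_\infty$ (the constants form a closed subspace of $L^\infty[0,1]$), hence itself a Banach space, and $\mathfrak{L}_{\kappa_1}^{\kappa_2}(\mat{P},\mat{Q})$ is its intersection with the codimension-$3$ closed set $\mathcal{R}_{\kappa_1}^{\kappa_2}(\mat{P},\mat{Q})$ cut out by $\Phi_\gamma(1)=\mat{Q}$. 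To conclude I would verify that this set is closed, which amounts to continuity of the endpoint map $(\hat v,\hat w)\mapsto\Phi_\gamma(1)$ on $\mathfrak{L}_{\kappa_1}^{\kappa_2}(\mat{P},\cdot)$ and follows from continuous dependence of the solution of \eqref{ivp} on its $L^\infty$ data; closedness then yields completeness of the inherited metric, and I would check that $\Theta$ and $\Theta^{-1}$ are continuous for the $B$-norm, so that the transported structure is genuinely the one making $\Theta$ an isomorphism.

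The main obstacle is the last structural point: the condition $\Phi_\gamma(1)=\mat{Q}$ is \emph{nonlinear} in $(\hat v,\hat w)$, so $\mathfrak{L}_{\kappa_1}^{\kappa_2}(\mat{P},\mat{Q})$ is a priori only a closed subset, not a linear subspace, and one must argue that it is nonetheless a trivial Banach manifold, i.e. globally a graph over a complemented closed subspace. The clean way to do this is to show that the endpoint map is a smooth submersion onto $\SO$ along the constraint set and to invoke the implicit function theorem in Banach spaces, upgrading the codimension-$3$ assertion already recorded for $\mathcal{R}_{\kappa_1}^{\kappa_2}(\mat{P},\mat{Q})$ to the slice; transporting the resulting global chart furnishes the complete norm and exhibits $\mathcal{P}_{\kappa_1}^{\kappa_2}(\mat{P},\mat{Q})$ as a trivial Banach manifold. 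The contrast with the $C^0$ topology, which fails to be complete in view of the length estimates behind Proposition \ref{propconvlength} and Lemma \ref{lemconvlength}, confirms that the two topologies genuinely differ, as noted in the remark after the statement.
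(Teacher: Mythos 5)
Your proposal is, in substance, the paper's own proof: the paper likewise identifies $\mathcal{P}_{\kappa_1}^{\kappa_2}(\mat{P},\mat{Q})$ with the constant-speed slice $\mathfrak{L}_{\kappa_1}^{\kappa_2}(\mat{P},\mat{Q})\subset\mathcal{R}_{\kappa_1}^{\kappa_2}(\mat{P},\cdot)$, using Proposition~\ref{prop13} for one inclusion and the assignment $\hat v=h(L_\gamma)$, $\hat w=h_{\kappa_1,\kappa_2}(\kappa)$ for the other, and then transports the norm. One small simplification: your appeal to Theorem~\ref{thm1-1} in the surjectivity step is unnecessary, since the bounds $\kappa_1<\essinf\kappa\leq\esssup\kappa<\kappa_2$ are already part of the definition of $\mathcal{P}_{\kappa_1}^{\kappa_2}(\mat{P},\mat{Q})$, so $\hat w\in L^\infty[0,1]$ comes for free.

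The more important point is that the ``main obstacle'' you flag is genuine, and it is a gap in the paper itself rather than only in your sketch: the endpoint condition $\Phi_\gamma(1)=\mat{Q}$ is nonlinear in $(\hat v,\hat w)$, so the paper's assertion that $\mathcal{R}_{\kappa_1}^{\kappa_2}(\mat{P},\mat{Q})$ (and hence $\mathfrak{L}_{\kappa_1}^{\kappa_2}(\mat{P},\mat{Q})$) is a ``closed subspace of codimension $3$'' carrying an ``induced complete norm'' gives, as literally stated, only a complete metric on a closed subset of a Banach space, not a linear subspace and not a Banach-space structure. Your proposed repair --- prove that the endpoint map $(\hat v,\hat w)\mapsto\Phi(1)$ is a smooth submersion onto $\SO$ (smoothness from differentiable dependence of solutions of \eqref{ivp} on the $L^\infty$ data, surjectivity of the derivative from the fact that perturbations of $v$ and $w$ generate all of $\mathfrak{so}_3(\mathbb{R})$ under bracketing) and invoke the Banach implicit function theorem --- is the correct direction, and it is more careful than what the paper does. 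Note, however, that the implicit function theorem only yields local charts, i.e., a closed Banach submanifold of codimension $3$; the \emph{triviality} claimed in the statement (a single global chart, hence a transported complete norm) requires an additional global argument, which neither your sketch nor the paper's proof supplies. So your proposal is at least as rigorous as the paper on the bijection, and strictly more honest about the structural step that remains.
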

\begin{proof} First, we confirm the  claim: $\mathfrak{L}_{\kappa_1}^{\kappa_2}(\mat{P},\mat{Q})=\mathcal{P}_{\kappa_1}^{\kappa_2}(\mat{P},\mat{Q})$.

Note that $\hat{v}(t)\equiv \hat{v}$ is constant.  Proposition \ref{prop13} implies that a parameterized curve $\gamma(t), t\in [0,1],$ in $\mathfrak{L}_{\kappa_1}^{\kappa_2}(\mat{P},\mQ)$ is  $C^1$ regular with its parameter  $t$ proportional to the arc-length and $[\gamma(t)]\in \mathcal{P}_{\kappa_1}^{\kappa_2}(\mat{P},\mat{Q})$. Reciprocally,
let $[\gamma]\in \mathcal{P}_{\kappa_1}^{\kappa_2}(\mat{P},\mat{Q})$, where $\gamma(t)$ is  taken to be  a $C^1$ regular parameterized curve with parameter $t\in [0,1]$ proportional to the arc-length.  Then it  is directly  verified that $\gamma(t)$ satisfies the ODE system (\ref{ivp}) for a.e. $t$,
\begin{equation*}
\dot{\Phi}(t) = \Phi(t)\Lambda(t)\quad \text{and}\quad \Phi(0)= \mat{P},
\end{equation*}
with $v(t)=|\dot{\gamma}(t)|\equiv L_\gamma$, $w(t)=L_{\gamma}\kappa(t).$

Take $\hat{v}(t)=h(L_{\gamma})$ and $\hat{w}(t)=h_{\kappa_1,\kappa_2}(\kappa(t))$. Then $\hat{v}\in\mathbb{R}$ and $\hat{w}\in L^{\infty}[0,1]$. Thus $\gamma(t)\in \mathfrak{L}_{\kappa_1}^{\kappa_2}(\mat{P},\mat{Q})$. 
So the claim holds. 

Note that $\mathfrak{L}_{\kappa_1}^{\kappa_2}(\mat{P},\mQ)$ is a closed subspace of $\mathcal{R}_{\kappa_1}^{\kappa_2}(\mat{P},\mQ)$. In fact, it can be obtained  via correspondence $\gamma\leftrightarrow (v,\hat{w})\in \mathbb{R}\times L^\infty[0,1] $ and $\Phi_\gamma(1)=\mat{Q}$. Hence $\mathfrak{L}_{\kappa_1}^{\kappa_2}(\mat{P},\mQ)$  has an induced complete norm. By the claim, 
$\mathcal{P}_{\kappa_1}^{\kappa_2}(\mat{P},\mat{Q})$ is a Banach space.

\end{proof}


\end{document}